\tikzstyle directed=[postaction={decorate,decoration={markings,
    mark=at position .65 with {\arrow{stealth}}}}]
\tikzstyle reverse directed=[postaction={decorate,decoration={markings,
    mark=at position .65 with {\arrowreversed{stealth};}}}]
\numberwithin{figure}{section}
\theoremstyle{plain}
\newtheorem{lemma}{Lemma}[section]
\newtheorem{corollary}[lemma]{Corollary}
\newtheorem{theorem}[lemma]{Theorem}
\newtheorem{proposition}[lemma]{Proposition}
\newtheorem{remark}[lemma]{Remark}
\newtheorem{definition}[lemma]{Definition}
\newtheorem{example}[lemma]{Example}
\newcommand{\tq}{:\hspace{4pt} }
\def\A{\mathcal{A}}
\def\span {\mathop {\rm span}\nolimits}
\def\rad {\mathop {\rm rad}\nolimits}
\def\asi {\mathop {\rm asi}\nolimits}
\def\ann {\mathop {\rm ann}\nolimits}
\def\N{{\mathbb N}}
\def\supp {\mathop {\rm supp}\nolimits}
\newcommand{\m}{\mathfrak{m}}
\newcommand{\LL}{\lambda}
\newcommand{\LLm}{\lambda_{\m}}
\begin{document}
\onehalfspacing
\subjclass[2010]{17D92, 17A60, 17A36, 05C25 }
\keywords{Genetic Algebra, Evolution Algebra, Graph, Absorption Radical} 

\title[charac. of the absorption radical of an e. a. using their associated graph]{characterization of the absorption radical of an evolution algebra using their associated graph}

\author[Paula Cadavid]{Paula Cadavid}
\address{Paula Cadavid}
\email{pacadavid@gmail.com}

\author[Tiago Reis]{Tiago Reis}
\address{Tiago Reis: Universidade Tegnol\'ogica  Federal do Paran\'a, Av. Alberto Carazzai, 1640 - Campus Corn\'elio Proc\'opio - PR, Brazil.}
\email{treis@utfpr.edu.br }

\author[Mary Luz Rodi\~no Montoya]{Mary Luz Rodi\~no Montoya}
\address{Mary Luz Rodi\~no Montoya: Instituto de Matem\'aticas - Universidad de Antioquia, Calle 67 N$^{\circ}$ 53-108, Medell\'in, Colombia}
\email{mary.rodino@udea.edu.co}

\begin{abstract}

In this paper we present a method for finding the absorbing radical of a finite-dimensional evolution algebra.  Such a method consists of finding the acyclic vertices of an oriented graph associated with the algebra. The set of generators associated with such vertices turn out to be the generators of the absorption radical.  As an application we use the absorption radical to study the decomposability of some degenerate evolution algebras.
\end{abstract}

\maketitle
\section{Introduction}

The theory of evolution algebras is a current field of research which started in 2006 (see \cite{tian1}) when Tian and Vojtechovsky established the theoretical foundations of these structures.  Then in \cite{tian} Tian studies the connection between this type of algebra and other structures such as graphs, Markov chains, groups, dynamical systems, among others. For a review of some advances in this type of algebra we refer the reader to \cite{ceballos/falcon/nunez/tenorio/2022, reis2020, boudi20, YMV, YPT,PMP3, cabrera21, YPMP, Elduque/Labra/2015, nilpotentelduque, Elduque/Labra/2019, TeseTiago} and the references therein.

Let $\mathbb{K}$ be a field and let $\A$ be a $\mathbb{K}$-algebra. We say that $\A$ is an evolution algebra if it admits a basis $B=\{e_i\}_{i \in \Lambda} $ such that $e_i \cdot e_i  =  \sum_{k \in \Lambda} w_{ik} e_k$ for $i\in \Lambda$ and $e_i \cdot e_j = 0$, for $i,j\in\Lambda$ such that $i\neq j$. In this case the  basis $B$  is called {\it natural}, the scalars $w_{ik} \in \mathbb{K}$ the {\it structure constants} of $\mathcal{A}$ (relative to $B$), and $M_{B}=(w_{ik})$ the {\it structure matrix} of $\A$ (relative to $B$).  From now on we will consider finite-dimensional evolution algebras over a field of arbitrary characteristic. 

Evolution algebras are, in general, non-associative but are not defined by identities, like Lie algebras or Jordan algebras, therefore their study uses different tools.  The most common strategy is to fix a natural basis and, whenever possible, to obtain conclusions that are independent of the choice of such a basis.

The absorption radical was introduced in \cite{YMV} where the authors introduce and discuss also the concepts of evolution ideal and evolution subalgebra, among others. In particular, they study evolution ideals with the absorption property, and call the intersection of all these ideals the absorption radical, and denote it by $rad(\A)$.  There they show that if $\A$ is an evolution algebra, then $\rad(\A)$ is the smallest ideal of $\A$ such that $\A/\rad(\A)$ is a non-degenerate evolution algebra.  Subsequently in \cite{cabrera21} the absorption radical was characterized using the  upper annihilating  series  and the annihilator stabilizing index, $\asi(\A)$. In fact, they showed that $\rad(\A)=\ann^{(r)}(\A)$, where $r=\asi(\A)$.

The main contribution of this work is that we give a simple method to calculate the absorption radical of an evolution algebra. Then as an application we explore the relationship between such a radical and the decomposability of the algebra when it is non-degenerate. For this purpose we use as our main tool a directed graph associated to an evolution algebra. Such a graph was introduced in \cite{Elduque/Labra/2015} where it was used to study decomposability, nilpotency and automorphisms. Such a feature is also used in \cite{PMP3,YPMP, YPT, reis2020} where the authors use characteristics of the associated graph to describe the derivations of the algebra.

This paper is organized as follows. In Sec.\ref{preliminaries} we review definitions and notations about directed graphs and evolution algebras that we will use to present our results. In Sec.\ref{caracradical}, we use the associated graph to an evolution algebra in such a way that the absorption radical can be totally determined by the properties of this graph. Proposition \ref{anni=spanlambdai} 
shows how to calculate  recursively the terms of the upper annihilating series and, as a consequence of this fact, Corollary \ref{cor:rad=annm} states that to calculate the absorption radical.  Again, as a consequence of Proposition \ref{anni=spanlambdai}, we can easily compute the type of a nilpotent evolution algebra, which was defined in \cite{nilpotentelduque}.  The main result is Theorem \ref{teo:rad}, which characterize the absorption radical in terms of the acyclic vertices of the associated graph. As an application of this fact Corollary \ref{cor:rad_nilpotente} we have that the radical is a nilpotent ideal. In Sec. \ref{decomposability}, our goal is to establish a relationship between the decomposability of an evolution algebra  and the decomposability of its absorption radical.
Theorem \ref{teo:quocienteconexo} show that, under certain conditions, if $\rad(\A)$ is decomposable then $\A$ is decomposable.

\section{Preliminaries}\label{preliminaries}


We start with basic definitions and notation for directed graphs. A {\it directed graph} is a pair $G=(V,E)$ where $V$ is a set and $E\subseteq V\times V$. The elements of $V$ are called the {\it vertices} of $G$ and the elements of $E$ the {\it arrows} or {\it directed edges} of $G$.  If $V$ and $E$ are both finite, we say that $G$ is {\it finite}. In this case, the {\it adjacency matrix} of $G$ is the matrix $A_{G}=(a_{ij})$ where $a_{ij}=1$ if $(i,j) \in E$ and $a_{ij}=0$ if $(i,j) \not\in E$. In this paper, we will consider only finite graphs.

Let $G=(V,E)$ a graph. We say that $H=(V',E')$ is a subgraph of  $G$ if $V'\subseteq V$ and $E'\subseteq E$. Furthermore, if $H=(V',E')$ is a subgraph of  $G$ such that if  $v,w\in V'$ and $(v,w) \in E$ then $(v,w) \in E'$, we say that $H$ is a {\it full subgraph} of $G$. Let $u,v \in V$. A {\it path} $\mu$ from $u$ to $v$ is a finite sequence of vertices $\mu= v_0v_1\dots v_{n-1}v_n$ such that $v_0=u$, $v_n=v$ and $(v_i,v_{i+1})\in E$ for all $i\in \{0,1,\dots,n-1 \}$. In this case we say that $n$ is the {\it length} of the path $\mu$ and denote by $\mu^0$ the set of its vertices, i.e.,
$\mu^0=\{v_0,v_1,\dots,v_n\}$. 
If $\mu = v_0v_1 \dots v_n$ is a path and $v_0=v_n$ then $\mu$ is called a {\it cycle based at} $v_1$. A cycle of length $1$ will be said to be a {\it loop}.
If $\theta$ is a cycle and there exist a path from $w\in V$ to $ z\in \theta^0$,  we say that  there exist  {\it a path from $w$ to $\theta$} and that $w$ is a {\it cyclic vertex}, otherwise we say that $w$ is an {\it acyclic vertex}. Let $v\in V$. The {\it first-generation descendants} of $v$ are the elements of the set $D^1(v)$ given by
$D^1(v)=\{u \in V \tq (v,u)\in E\}.$ 
If $V' \subseteq V$ we will denote the first-generation descendants of $V'$ by $\displaystyle D^1(V')=\{w\in V \tq w\in D^1(u) \text { for some } u\in V' \}.$ By recurrence, we define the set of {\it mth-generation descendants} of $v$ by $D^{m}(v)=D^1(D^{m-1}(v)),$ for  $m \in \N^* $. Finally, the set of {\it descendants} of $v$ is defined by  $$\displaystyle D(v)=\bigcup_{m \in \N^*} D^m(v).$$ 
A graph $G=(V,E)$ is called {\it connected}  if for all partition $V=V_1\cup V_2$ there are $x \in V_1$ and $y\in V_2$ such that $(x,y)\in E$ or $(y,x)\in E$.


There appear several ways of associating an oriented graph to an evolution algebra, see for example \cite{YMV, Elduque/Labra/2015}, we will consider the one used in \cite{YMV}. Given a natural basis $B=\{e_i\}_{i \in \Lambda}$ of an evolution algebra $\A$ and its structure matrix $M_B=(\omega_{ij})$, consider the matrix
$P=(a_{ij})$ such that $a_{ij}=0$ if $\omega_{ij}=0$ and $a_{ij}=1$ if $\omega_{ij}\neq 0$. The {\it graph associated to the evolution algebra} $\A$ (relative to the basis $B$), denoted by $\Gamma(\A,B)$ is the directed graph whose adjacency matrix is given by $P=(a_{ij})$. At this point it is important to note that the graph associated with an evolution algebra $\A$ depends on the natural basis.  Examples illustrating this dependence can be found in \cite[Example 2.5]{Elduque/Labra/2015} and \cite[Example 2]{YPMP}.

Let $\A$ be an  evolution algebra  and $B=\{ e_1, \ldots, e_n\}$ a natural basis. If $u=\sum_{i=1}^n x_ie_i \in \A$ the the {\it support of $u$ relative to $B$} is defined as $\supp_B(u)=\{i \tq x_i\not=0\}$. We say that $\A$ is {\it decomposable} if there are non-zero ideals $I$ and $J$ such that $\A= I\oplus J$. Otherwise, $\A$ is called {\it indecomposable}. The {\it  annihilator} of $\A$ is $\ann(\A)=\{ x\in A \tq x\A=\{0\}\}$. In \cite{Elduque/Labra/2015}, the authors proof that $\ann(\A)= \span\{e_i \tq e_i^2=0\}$, which means that $\ann(\A)$ is generated by the elements of $B$ corresponding to the sources of $\Gamma(\A,B)$. If $\ann(\A)=\{0\}$ we say that $\A$ is {\it non-degenerate}. It is well known that the indecomposability of $\A$ and the connectedness of $\Gamma(\A,B)$ are related. In fact,  by \cite[Proposition 2.10]{Elduque/Labra/2015},  $\A$ is indecomposable if and only if the graph $\Gamma(\A,B)$ is connected for any natural basis $B$ of $\A$. Moreover, if $\A$ is non-degenerated and there is a natural basis $B$ such that $\Gamma(\A,B)$ is connected then $\A$ is indecomposable \cite[Proposition 2.8]{Elduque/Labra/2015}.

Let $\A'$ be a subalgebra of an evolution algebra $\A$. 
We say that $\A'$ is an \textit{evolution subalgebra} if $\A'$ is an evolution algebra. Analogously,  we say that $I$ is an \textit{evolution ideal}  if is and ideal  of $\A$ that has a natural basis.  Here we are using the definitions given in \cite{YMV}, not the definitions given in \cite{tian}, and refer the reader to \cite{YMV} for details on how these  definitions differ from those given in \cite{tian}. Again following  \cite{YMV}, we say that a  evolution  subalgebra $\A'$ has the \textit{extension property} if there exists a natural basis $B$  of $\A'$ which can be extended to a natural basis of $\A$.

\begin{definition} \rm
Let $I$ be an ideal of an evolution algebra $\A$. We will say that $I$ has the {\it absorption property} if $x\A\subseteq I$ implies $x\in I$.
\end{definition}
Equivalently, an ideal $I$ has the property of absorption if $\ann(A/I)=0$. In addition, if it has the absorption property, then it also has the following characteristic.

\begin{proposition}\cite[Lemma 2.25]{YMV} \label{prop:extension_property}Let $\A$ be an evolution algebra with a natural basis $B= \{e_i\}_{i \in \Lambda}$ and $I$ be a non-zero ideal. If $I$ has the absorption property then there  is $\Lambda' \subseteq \Lambda$ such that $I=\span\{e_i\}_{i \in \Lambda'}$. In particular, $I$ has the extension property.
\end{proposition}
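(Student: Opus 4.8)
The plan is to identify the spanning set explicitly. Set $\Lambda' = \{i \in \Lambda \tq e_i \in I\}$; I will prove $I = \span\{e_i\}_{i \in \Lambda'}$. The inclusion $\span\{e_i\}_{i \in \Lambda'} \subseteq I$ is immediate, so the whole content lies in showing $I \subseteq \span\{e_i\}_{i \in \Lambda'}$, for which it suffices to prove that for every $u \in I$ one has $\supp_B(u) \subseteq \Lambda'$, i.e. $e_j \in I$ for each $j \in \supp_B(u)$.

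The first key step exploits the orthogonality of the natural basis to isolate a single diagonal term. Writing $u = \sum_{i} x_i e_i$ and fixing $j \in \supp_B(u)$, I compute $u \cdot e_j$: since $e_i \cdot e_j = 0$ whenever $i \neq j$, every cross term drops out and $u \cdot e_j = x_j\, e_j^2$. As $I$ is an ideal, $u \cdot e_j \in I$; and since $x_j \neq 0$ I may scale by $x_j^{-1}$ to obtain $e_j^2 \in I$.

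The second, and decisive, step is to invoke the absorption property in order to pass from the square $e_j^2$ back to the generator $e_j$. The same orthogonality gives $e_j \cdot y = y_j\, e_j^2$ for every $y = \sum_k y_k e_k \in \A$, so $e_j \cdot \A = \K e_j^2 \subseteq I$. The absorption hypothesis (if $x\A \subseteq I$ then $x \in I$), applied to $x = e_j$, now yields $e_j \in I$, that is $j \in \Lambda'$. This is precisely where the hypothesis is used: a plain ideal computation delivers only $e_j^2 \in I$, and it is the absorption property that promotes this to $e_j \in I$. Ranging over all $u \in I$ and all $j \in \supp_B(u)$ establishes $I = \span\{e_i\}_{i \in \Lambda'}$. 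For the final clause, $\{e_i\}_{i \in \Lambda'}$ is a natural basis of $I$---the relations $e_i \cdot e_j = 0$ for $i\neq j$ and $e_i^2 \in I$ are inherited from $B$---and it is by construction a subset of the natural basis $B$ of $\A$, so it extends to $B$; hence $I$ enjoys the extension property.

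I do not anticipate a genuine obstacle, as the argument is essentially a two-line computation followed by a single application of the hypothesis. The only point requiring a little care is uniformity in the degenerate case $e_j^2 = 0$: there the ideal step reads $0 \in I$ and the absorption step reads $e_j \cdot \A = \{0\} \subseteq I$, so $e_j \in I$ still follows, and no assumption on the characteristic of $\K$ (beyond dividing by the nonzero scalar $x_j$) is ever needed.
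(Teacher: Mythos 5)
Your proof is correct. Note that the paper itself gives no proof of this statement --- it is quoted with a citation to \cite[Lemma 2.25]{YMV} --- so there is no in-paper argument to compare against; your argument (isolate the diagonal term via $u\cdot e_j = x_j e_j^2$ to get $e_j^2\in I$, observe $e_j\A=\K e_j^2\subseteq I$, then invoke absorption to promote $e_j^2\in I$ to $e_j\in I$) is the standard one, and it is exactly the computational technique the paper itself uses later in the proof of Proposition \ref{prop:PAsessD(j)nsubLambdaI}. Your handling of the degenerate case $e_j^2=0$ and of the final extension-property clause is also correct.
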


Now we present our main object of study.
\begin{definition} \rm
Let $\A$ be  an evolution algebra and let $\Psi (\A)$ be  the class of all ideals of $\A$ having the absorption property. The  \textit{absorption radical} of $\A$, denoted by $\rad(\A)$, is defined by
$$\displaystyle \rad (\A)=\bigcap_{I \in \Psi (\A)} I .$$
\end{definition}
Clearly the  absorption radical is an ideal  with the absorption property.

To establish our results we will use the upper annihilating series and the annihilator stabilizing index (which was introduced in \cite{cabrera21}), which are defined for commutative algebras as follows. Let $\A$ be a commutative algebra. We define $\ann^{(0)}(\A)=\{0\}$ and $\ann^{(i)}(\A)$ in the following way 
$$\ann^{(i)}(\A)/\ann^{(i-1)}(\A)=\ann(\A/(\ann^{(i-1)}(\A))). $$
The chain of ideals
$$\{0 \}=\ann^{(0)}(\A)\subseteq \ann^{(1)}(\A)\subseteq \ldots \subseteq  \ann^{(i)}(\A) \subseteq \dots $$
is called the {\it upper annihilating series}. If there exists $k$ such that $k=\min \{q: \ann^{(q)}(\A)= \ann^{(q+1)}(\A)\}$, then we call it the \textit{annihilator stabilizing index} of $\A$, denoted by $\asi(\A)$. We emphasize that if $\A$ is a finite dimensional algebra then $\asi(\A)$ exists. Finally, if  $\A$ is  a nilpotent  evolution algebra and $r=\asi(\A)$ then the  {\it type} of $\A$ is the sequence $[n_1, \dots, n_r]$ such that $n_1+\dots+ n_i=\dim (\ann^{(i)}(\A))$ for all $i\in \{1,\dots, r\}$. In other words, 
$$n_i=\dim(\ann(\A/\ann^{(i-1)}(\A)))=\dim(\ann^{(i)}(\A))-\dim(\ann^{(i-1)}(\A)).$$

\section{Characterization of the absorption radical} \label{caracradical}

The main purpose of this section is to present Theorem \ref{teo:rad} which provides a description of the absorption radical of an evolution algebra using its associated graph.  We begin our discussion by pointing out that, as \cite[Example 2.26]{YMV} shows, the reciprocal of Proposition \ref{prop:extension_property} is not valid. However, our first result provides a sufficient and necessary condition for an ideal with the extension property to have also the absorption property.


\begin{proposition} \label{prop:PAsessD(j)nsubLambdaI}
Let $\A$ be an evolution algebra, $I$ be  an ideal having the 
extension property and  $B=\{e_i\}_{i \in \Lambda}$ a natural basis of $\A$ such that $I=\span \{ e_i\}_{i \in\Lambda'}$ with $\Lambda' \subseteq \Lambda$. Then $I$ has the absorption property if, and only if, 
\begin{equation} \label{eq:D(j)subLambdaI}
 D^1 (j) \not\subseteq \Lambda', \text{ for all } j \in \Lambda \setminus\Lambda'.
\end{equation}
\end{proposition}
\begin{proof}
Let  $M_B=(w_{ij})$ be the structure matrix of  $\A$ relative to $B$. Suppose that  $I$  has the absorption property and, contrary to our claim, that there is  $j\in \Lambda\setminus \Lambda'$ such that $D^1(j) \subseteq \Lambda'$. If $x=\sum_{k \in \Lambda} x_ke_k \in \A$ then   $$xe_j=\left( \sum_{k \in \Lambda} x_ke_k\right)e_j=x_je_j^2=x_j\left(\sum_{k \in D^1(j)}w_{jk}e_k\right).$$
As $D^1(j)\subseteq \Lambda'$ then $e_jx \in I$ and consequently $e_j\A \subseteq I$. Thus $e_j \in I$, which is a contradiction, since that  $j\in \Lambda\setminus \Lambda'$. Conversely, suppose that $I=\span \{ e_i\}_{i \in\Lambda'}$ is an ideal having the extension property that satisfies Eq. \eqref{eq:D(j)subLambdaI} and
let $x=\sum_{k \in \Lambda} x_ke_k \in \A$ such that  $x\A\subseteq I$. If  $x \not\in I$  then there exist $j \in \Lambda\setminus \Lambda'$ such that $x_j\neq 0$.
Therefore
$$\displaystyle xe_j=x_je_j^2=x_j\left(\sum_{k \in \Lambda}w_{jk}e_k\right)=x_j\left(\sum_{k \in D^1(j)}w_{jk}e_k\right)\in I.$$ 
Then  $\supp_B ( xe_j)=D^1(j) \subseteq \Lambda'$, a contradiction since that $I$ satisfies Eq.\eqref{eq:D(j)subLambdaI}. Therefore $x \in I$, i.e., $I$ has the absorption property.
\end{proof}

\begin{corollary} \rm \label{cor:propriedades}
Let $\A$ be an evolution algebra with a natural basis $B= \{e_i\}_{i \in \Lambda}$, $\rad(\A)=\span\{e_i\}_{i \in \Lambda'}$ with $\Lambda' \subseteq \Lambda$. If  $j \in \Lambda$ and 
$D^n(j)\subseteq \Lambda' $ for some $n \in \mathbb{N}^{*}$, then $j \in  \Lambda'$.
\end{corollary}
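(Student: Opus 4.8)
The plan is to prove the statement by induction on $n$, using Proposition \ref{prop:PAsessD(j)nsubLambdaI} for the base case and the recursive definition of the generation-descendant sets to carry out the inductive step.

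First I would set up the base case $n=1$. Since $\rad(\A)$ is an ideal with the absorption property and is given here as $\span\{e_i\}_{i \in \Lambda'}$ with $\Lambda' \subseteq \Lambda$, it is spanned by part of the natural basis $B$ and hence has the extension property; therefore Proposition \ref{prop:PAsessD(j)nsubLambdaI} applies to it and gives $D^1(j) \not\subseteq \Lambda'$ for every $j \in \Lambda \setminus \Lambda'$. Reading this contrapositively yields exactly the assertion for $n=1$: if $D^1(j) \subseteq \Lambda'$ then $j \in \Lambda'$. In particular this covers sources, where $D^1(j)=\emptyset \subseteq \Lambda'$ forces $j \in \Lambda'$.

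For the inductive step I would assume the statement for $n-1$, namely that $D^{n-1}(j)\subseteq \Lambda'$ implies $j\in\Lambda'$ for every $j$, and take some $j\in\Lambda$ with $D^n(j)\subseteq\Lambda'$. Unwinding the definition, $D^n(j)=D^1(D^{n-1}(j))=\bigcup_{u \in D^{n-1}(j)} D^1(u)$, so for each $u\in D^{n-1}(j)$ we get $D^1(u)\subseteq D^n(j)\subseteq\Lambda'$. Applying the base case to each such $u$ gives $u\in\Lambda'$, whence $D^{n-1}(j)\subseteq\Lambda'$; the induction hypothesis then delivers $j\in\Lambda'$, closing the induction.

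I do not anticipate a genuine obstacle: the argument is a routine induction once one recognizes that the base case is precisely the contrapositive of Proposition \ref{prop:PAsessD(j)nsubLambdaI}. The only points demanding a little care are the correct unwinding of $D^n(j)$ as a union of first-generation sets indexed by $D^{n-1}(j)$, and checking that the degenerate situations cause no harm: an empty $D^{n-1}(j)$ makes $D^n(j)$ empty and the inclusion $D^{n-1}(j)\subseteq\Lambda'$ trivially true, while the edge case $\Lambda'=\emptyset$ (i.e.\ $\A$ non-degenerate) renders the hypothesis unattainable, since then every vertex has an out-neighbour and so $D^n(j)\neq\emptyset$ for all $j$ and $n$.
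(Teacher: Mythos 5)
Your proof is correct and follows essentially the same route as the paper's: both establish the case $n=1$ from Proposition \ref{prop:PAsessD(j)nsubLambdaI} (using that $\rad(\A)$ has the absorption property and is spanned by basis elements), then use $D^n(j)=D^1(D^{n-1}(j))$ to descend from $D^n(j)\subseteq\Lambda'$ to $D^{n-1}(j)\subseteq\Lambda'$ and repeat. The only difference is presentational: you package the descent as a formal induction on $n$, while the paper writes it as ``continuing with this process.''
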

\begin{proof}

Let $j\in  \Lambda$. For $n=1$ the statement follows directly from Proposition \ref{prop:PAsessD(j)nsubLambdaI} because $\rad(\A)$ is an ideal with the absorption property. If $D^n(j)\subseteq \Lambda'$ for $n>1$, since  $D^n(j)=D^1(D^{n-1}(j))$ then $D^1(k)\subseteq \Lambda'$ for $k \in D^{n-1}(j)$. Therefore $D^{n-1}(j) \subseteq \Lambda'$. Continuing with this process our statement follows.
\end{proof}

\begin{remark}\rm \label{obs:deflambda}
Let  $\A$ be an evolution algebra with a natural basis   $B=\{e_i\}_{i \in \Lambda}$. We define the following subsets of $\lambda_1(B)=\{i \in \Lambda \tq e_i^2=0 \}$  and $\lambda_k(B)=\{i \in \Lambda \tq D^1(i)\subseteq \lambda_{k-1}(B)\}$, for all $k > 1.$
Let us see that $\lambda_{k}(B)\subseteq \lambda_{k+1}(B)$ for $k \in \mathbb{N}^{\star}$.  If $i \in \lambda_1(B)$, then $D^1(i)=\emptyset$ and consequently  $D^1(i) \subseteq \lambda_1(B)$.
Therefore $\lambda_1(B)\subseteq \lambda_2(B)$. 
Suppose that $\lambda_{t-1}(B)\subseteq \lambda_t(B)$  for some  $t>1$. If $j \in \lambda_t(B)$ then  $D^1(j)\subseteq \lambda_{t-1}(B) \subseteq \lambda_t(B),$
which means that $j\in \lambda_{t+1}(B)$ and therefore  $\lambda_t(B)\subseteq \lambda_{t+1}(B)$. In this way we have constructed the following  increasing sequence of subsets of $\Lambda$ 
\begin{equation} \label{eq:lambdaj}
    \lambda_1 (B)\subseteq \lambda_2(B)  \subseteq \dots \subseteq \lambda_k(B) \subseteq \dots \subseteq \Lambda.
\end{equation}

\end{remark}

\begin{proposition}
\label{anni=spanlambdai}
Let  $\A$ be an evolution algebra with a natural basis   $B=\{e_i\}_{i \in \Lambda}$. Then $$\ann^{(i)}(\A)=\span \{e_j \tq j \in \lambda_i (B) \} \text{ for all } i \in \N^*.$$
\end{proposition}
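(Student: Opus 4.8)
The plan is to argue by induction on $i$, applying at each stage the characterization $\ann(\A)=\span\{e_i \tq e_i^2=0\}$ from \cite{Elduque/Labra/2015} — but to successive quotients of $\A$ rather than to $\A$ itself. For the base case $i=1$, I would note that $\ann^{(1)}(\A)=\ann(\A)=\span\{e_j \tq e_j^2=0\}$ by the cited result, and that the index set $\{j \tq e_j^2=0\}$ is by definition exactly $\lambda_1(B)$; this gives the case $i=1$ immediately.

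For the inductive step, suppose $\ann^{(i-1)}(\A)=\span\{e_j \tq j \in \lambda_{i-1}(B)\}$. The crucial point is that this ideal is spanned by natural basis vectors, hence is an evolution ideal, so the quotient $\A/\ann^{(i-1)}(\A)$ is again an evolution algebra with natural basis $\{\bar e_j \tq j \in \Lambda\setminus\lambda_{i-1}(B)\}$, where $\bar e_j$ denotes the class of $e_j$, and with structure constants inherited from $M_B$. Computing squares in the quotient, $\bar e_j^{\,2}=\sum_{k\in\Lambda\setminus\lambda_{i-1}(B)} w_{jk}\bar e_k$, since the terms indexed by $k\in\lambda_{i-1}(B)$ vanish. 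Therefore $\bar e_j^{\,2}=0$ precisely when $w_{jk}=0$ for every $k\notin\lambda_{i-1}(B)$, that is, when $D^1(j)\subseteq\lambda_{i-1}(B)$; by the definition in Remark \ref{obs:deflambda} this is exactly the condition $j\in\lambda_i(B)$.

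I would then apply the Elduque--Labra characterization to this quotient, obtaining $\ann(\A/\ann^{(i-1)}(\A))=\span\{\bar e_j \tq \bar e_j^{\,2}=0\}=\span\{\bar e_j \tq j\in\lambda_i(B)\setminus\lambda_{i-1}(B)\}$, where I use $\lambda_{i-1}(B)\subseteq\lambda_i(B)$ from Eq. \eqref{eq:lambdaj}. By the definition of the upper annihilating series this equals $\ann^{(i)}(\A)/\ann^{(i-1)}(\A)$, so pulling back along the quotient map and combining with the inductive hypothesis $\ann^{(i-1)}(\A)=\span\{e_j \tq j\in\lambda_{i-1}(B)\}$ yields $\ann^{(i)}(\A)=\span\{e_j \tq j\in\lambda_i(B)\}$, completing the induction.

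The step requiring the most care — though it is bookkeeping rather than a genuine obstacle — is the verification that $\A/\ann^{(i-1)}(\A)$ really is an evolution algebra whose natural basis is the image of $\{e_j \tq j\notin\lambda_{i-1}(B)\}$, with the inherited structure constants. Once this is secured, the identification of the vanishing squares $\bar e_j^{\,2}=0$ with the descendant condition $D^1(j)\subseteq\lambda_{i-1}(B)$ is immediate, and it is precisely this identification that converts the recursive definition of the $\lambda_k(B)$ into the recursion defining the upper annihilating series, driving the entire argument.
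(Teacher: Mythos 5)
Your proof is correct, but it takes a genuinely different route from the paper's. The paper also argues by induction with the same base case, but its inductive step stays inside $\A$: it first invokes \cite[Lemma 3.5 (i)]{cabrera21} to rewrite $\ann^{(i+1)}(\A)$ as $\{x \in \A \tq x\A \cup \A x \subseteq \ann^{(i)}(\A)\}$, and then proves the two inclusions $\ann^{(i+1)}(\A)\subseteq \span\{e_j \tq j\in\lambda_{i+1}(B)\}$ and the converse by explicit element-and-support computations ($xe_k = x_k\sum_{\ell\in D^1(k)}w_{k\ell}e_\ell$, etc.). You instead work directly from the defining relation $\ann^{(i)}(\A)/\ann^{(i-1)}(\A)=\ann(\A/\ann^{(i-1)}(\A))$ of the upper annihilating series: you pass to the quotient, observe that modding out an ideal spanned by natural basis vectors yields an evolution algebra with the inherited natural basis and structure constants (the bookkeeping you rightly flag; it is essentially what the paper proves separately, and later, as Proposition \ref{prop:matrizblocosrad}), and then apply the Elduque--Labra formula $\ann = \span\{e \tq e^2=0\}$ in that quotient, which identifies the new annihilator classes with $\lambda_i(B)\setminus\lambda_{i-1}(B)$ in one line. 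What your approach buys is conceptual economy: it needs no external characterization of $\ann^{(i)}$ from \cite{cabrera21} and replaces the two-inclusion computation by a single application of a known result; what it costs is the (easy but necessary) verification of the quotient's evolution structure, which the paper's in-$\A$ computation avoids entirely. Both arguments hinge on the same combinatorial identification, namely that vanishing of a square modulo $\ann^{(i-1)}(\A)$ is exactly the condition $D^1(j)\subseteq\lambda_{i-1}(B)$ from Remark \ref{obs:deflambda}.
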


\begin{proof}
First,  we observe that  
$\ann^{(i)} (\A)= \{x \in \A: x\A \cup \A x \subseteq \ann^{(i-1)}(\A)\}\text{ for } i\geq 1$, by \cite[Lemma 3.5 (i)]{cabrera21}. We will prove our statement by induction. For $i=1$ we have that $\ann^{(1)}(\A)=\ann(\A)=\span\{e_j \tq e_j^2=0\}=\span\{e_j \tq j \in \lambda_1(B) \}$.
Suppose that $\ann^{(i)}(\A)=\span \{e_j \tq j \in \lambda_i (B)\}$ for some $i \in \N^*$. We will prove that  $\ann^{(i+1)}(\A)\subseteq \span \{e_j \tq j \in \lambda_{i+1}(B)\}$. 
Let  $x=\sum_{t \in \Lambda'}x_te_t\in \ann^{(i+1)}(\A)$, where $\Lambda' =\supp_B (x)$. 
If $k \in \Lambda'$ then
$$xe_k=\left(\sum_{t \in \Lambda'} x_t e_t\right)e_k=x_ke_k^2=x_k\left(\sum_{\ell \in D^1(k)}w_{k\ell}e_{\ell} \right) \in \ann^{(i)}(\A)=\span \{e_j \tq  j \in \lambda_{i}(B) \},$$
namely $D^1(k) \subseteq \lambda_k(B)$ and therefore $k \in \lambda_{i+1}(B)$. 
We conclude that  $x \in \span \{e_j \tq j \in \lambda_{i+1} (B)\}$. To prove  the other inclusion,  that is  that $\span \{e_j \tq j \in \lambda_{i+1}(B)\} \subseteq \ann^{(i+1)}(\A)$,  let  $  x =\sum_{t\in \lambda_{k+1}(B)}x_te_t\in\span\{e_j \tq j \in \lambda_{i+1}(B) \}$ and  $ y =\sum_{t\in \Lambda } y_te_t\in \A$. Then
$$xy=\left(\sum_{t\in \lambda_{i+1}(B)}x_te_t \right)\left( \sum_{t\in \Lambda } y_te_t\right)=\sum_{t\in \lambda_{i+1}(B)}x_ty_te_t^2=\sum_{t\in \lambda_{i+1}(B)}x_ty_t\left(\sum_{\ell \in D^1(t)}w_{t\ell}e_{\ell}\right).$$
As  $t \in \lambda_{k+1}(B)$ then $D^1(t)\subseteq \lambda_k(B)$. Therefore $xy \in \span \{e_t \tq t \in \lambda_k(B)\}=\ann^{(i)}(\A)$.  Thus  $x\A\subseteq \ann^{(i)}(\A)$ and, consequently,  $x\in \ann^{(i+1)}(\A)$.
\end{proof}

Our next result shows how to determine the absorption radical of an evolution algebra using only the associated graph structure. This result will be improved in Theorem \ref{teo:rad}.

\begin{corollary}\label{cor:rad=annm}
Let $\A$ be an evolution algebra with a natural basis  $B=\{e_i\}_{i \in \Lambda}$ and $\m=\asi(\A)$. Then $$\rad (\A)=\span \{e_i \in B \tq i \in \lambda_{\m}(B) \}.$$
\end{corollary}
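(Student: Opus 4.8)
The goal is to show $\rad(\A) = \span\{e_i : i \in \lambda_{\m}(B)\}$ where $\m = \asi(\A)$. The key observation is that the previous result, Proposition \ref{anni=spanlambdai}, already identifies each term of the upper annihilating series as $\ann^{(i)}(\A) = \span\{e_j : j \in \lambda_i(B)\}$, so the remaining work is to connect the absorption radical to the upper annihilating series at the stabilizing index.

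First I would invoke the characterization of the absorption radical established in \cite{cabrera21}, namely that $\rad(\A) = \ann^{(r)}(\A)$ where $r = \asi(\A)$. Since by definition $\m = \asi(\A)$, this immediately gives $\rad(\A) = \ann^{(\m)}(\A)$. Then I would simply substitute the explicit description of $\ann^{(\m)}(\A)$ provided by Proposition \ref{anni=spanlambdai}, specialized to $i = \m$, to obtain
\begin{equation*}
\rad(\A) = \ann^{(\m)}(\A) = \span\{e_j : j \in \lambda_{\m}(B)\},
\end{equation*}
which is precisely the claimed identity. This is the cleanest route: the corollary is genuinely a corollary, and almost all of the substantive content lives in Proposition \ref{anni=spanlambdai} and the external result from \cite{cabrera21}.

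If one wished to avoid leaning on \cite{cabrera21}, an alternative self-contained argument would run as follows. By the increasing chain \eqref{eq:lambdaj} and the fact that $\Lambda$ is finite, the sequence $\lambda_k(B)$ stabilizes; by Proposition \ref{anni=spanlambdai} this stabilization index for the sets $\lambda_k(B)$ coincides with $\m = \asi(\A)$, so $\lambda_{\m}(B) = \lambda_{\m+1}(B) = \dots$. One then checks that $I := \span\{e_j : j \in \lambda_{\m}(B)\}$ has the absorption property: since $\lambda_{\m}(B) = \lambda_{\m+1}(B)$, no $j \in \Lambda \setminus \lambda_{\m}(B)$ can have $D^1(j) \subseteq \lambda_{\m}(B)$, so Eq. \eqref{eq:D(j)subLambdaI} holds and Proposition \ref{prop:PAsessD(j)nsubLambdaI} applies. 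This shows $\rad(\A) \subseteq I$. For the reverse inclusion, one verifies that every ideal with the absorption property contains each generator $e_j$ with $j \in \lambda_{\m}(B)$, arguing by induction on the smallest $k$ with $j \in \lambda_k(B)$ and using Corollary \ref{cor:propriedades} to climb the generations of descendants.

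I expect no serious obstacle here, since the heavy lifting was done in Proposition \ref{anni=spanlambdai}; the only point requiring minor care is the identification that the stabilization index of the set-sequence $\lambda_k(B)$ agrees with $\asi(\A)$, which follows because $\ann^{(k)}(\A) = \ann^{(k+1)}(\A)$ if and only if $\lambda_k(B) = \lambda_{k+1}(B)$ by Proposition \ref{anni=spanlambdai}. Given the direct appeal to \cite{cabrera21}, the first approach yields a one-line proof and is the one I would write.
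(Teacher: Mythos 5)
Your primary argument is exactly the paper's proof: it invokes \cite[Proposition 3.7 (ii)]{cabrera21} to get $\rad(\A)=\ann^{(\m)}(\A)$ and then substitutes the description of $\ann^{(\m)}(\A)$ from Proposition \ref{anni=spanlambdai}. The alternative self-contained argument you sketch is sound but unnecessary; the one-line route you chose is what the paper does.
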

\begin{proof}
As $\A$ is a finite dimensional algebra then there exist $\m\in \mathbb{N}$ such that  $\m=\asi(\A)$. On the other side, by \cite[Proposition 3.7 (ii)]{cabrera21}, we have that $\rad(\A)=\ann^{(\m)}(\A)$ and using Proposition \ref{anni=spanlambdai} we get our statement.
\end{proof}

\begin{example} \label{exa:radical}
Let $\A$ be an evolution algebra and $B=\{e_1, e_2,e_3,e_4,e_5\}$ a natural basis such that $e_1^2=0$, $e_2^2=e_1$, $e_3^2=e_4$, $e_4^2=e_5$ and $e_5^2=e_3$.  The graph $\Gamma(\A, B)$ presented in  Figure \ref{fig:exarad}.
\begin{figure}[!h]
\centering
\begin{tikzpicture}[scale=1]
\draw [thick] (0,0) circle (7pt);
\draw (0,0) node[font=\footnotesize] {1};
\draw [thick] (2,0) circle (7pt);
\draw (2,0) node[font=\footnotesize] {2};
\draw [thick] (4,0) circle (7pt);
\draw (4,0) node[font=\footnotesize] {3};
\draw [thick] (5.5,1) circle (7pt);
\draw (5.5,1) node[font=\footnotesize] {4};
\draw [thick] (5.5,-1) circle (7pt);
\draw (5.5,-1) node[font=\footnotesize] {5};
\draw [thick, directed]  (1.75,0) to (0.25,0)  ;
\draw [thick, directed] (3.75,0) to (2.25,0)   ;
\draw [thick, directed] (4.18,0.18) to  (5.32,0.82);
\draw [thick, directed] (5.5,0.75) to  (5.5,-0.75);
\draw [thick, directed] (5.32,-0.82) to  (4.18,-0.18);
\end{tikzpicture}
\caption{$\Gamma (\A,B)$ for the algebra $\A$ of Example \ref{exa:radical}.}
\label{fig:exarad}
\end{figure}

\noindent Using Remark \ref{obs:deflambda} we have that 
$\lambda_1(B)=\Lambda_0=\{1\} \text{ and } \lambda_2(B)=\{1,2 \}=\lambda_t(B)\text{ for all } t > 2.$
Therefore $\asi (\A)=2$ and by Corollary \ref{cor:rad=annm} we obtain that $\rad(\A)=\ann^{(2)}(\A)=\span\{e_1,e_2 \}.$
\end{example}

\begin{corollary}
Let $\A$ be a nilpotent evolution algebra with a natural basis $B=\{e_i\}_{i \in \Lambda}$,  $\m=\asi(\A)$ and $[n_1, \dots, n_{\m}]$ the type of $\A$. Then  
$$n_i=|\LL_i(B)|-|\LL_{i-1}(B)|\text{ for all }i \in \{1,\dots,\m \}.$$
\end{corollary}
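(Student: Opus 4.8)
The plan is to read off both sides of the claimed identity from results already established, so that the argument reduces to a bookkeeping of dimensions. First I would invoke Proposition \ref{anni=spanlambdai}, which gives $\ann^{(i)}(\A)=\span\{e_j \tq j\in\lambda_i(B)\}$ for every $i\in\N^*$. Since $\{e_j \tq j\in\lambda_i(B)\}$ is a subset of the natural basis $B$, it is linearly independent, and hence
$$\dim\bigl(\ann^{(i)}(\A)\bigr)=|\lambda_i(B)|\quad\text{for all } i\in\N^*.$$

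Next I would recall the definition of the type of a nilpotent evolution algebra given in the preliminaries: for $i\in\{1,\dots,\m\}$ one has $n_i=\dim(\ann^{(i)}(\A))-\dim(\ann^{(i-1)}(\A))$. Substituting the dimension formula above into both terms (which is legitimate as soon as $i-1\geq 1$) yields directly $n_i=|\lambda_i(B)|-|\lambda_{i-1}(B)|$.

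The only point requiring care is the base case $i=1$, since $\lambda_0(B)$ is not among the sets introduced in Remark \ref{obs:deflambda}. Here one uses $\ann^{(0)}(\A)=\{0\}$, so that $\dim(\ann^{(0)}(\A))=0$ and therefore $n_1=|\lambda_1(B)|$; the stated formula then holds under the natural convention $\lambda_0(B)=\emptyset$, i.e. $|\lambda_0(B)|=0$. I do not expect any genuine obstacle: the entire substance of the corollary is already contained in Proposition \ref{anni=spanlambdai}, and the argument is complete once the identification of dimension with cardinality and the $i=1$ convention are made explicit.
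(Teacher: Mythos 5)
Your proposal is correct and takes essentially the same route as the paper's own (one-line) proof: both reduce the claim to the identity $\dim(\ann^{(i)}(\A))=|\lambda_i(B)|$ obtained from Proposition \ref{anni=spanlambdai} and then subtract consecutive dimensions using the definition of the type. Your explicit treatment of the base case $i=1$ via $\ann^{(0)}(\A)=\{0\}$ and the convention $\lambda_0(B)=\emptyset$ is a small point the paper leaves implicit, and it is handled correctly.
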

\begin{proof}
By Proposition \ref{anni=spanlambdai} we have that $\dim (\ann^{(i)}(\A))=|\LL_i(B)|$ and the statement follows.
\end{proof}

\begin{corollary} \label{cor:Lmciclo}
Let  $\A$ be an evolution algebra with a natural basis $B=\{e_i\}_{i \in \Lambda}$ and  $\m=\asi(\A)$.  Then $\lambda_{\m}(B)$ satisfies the following conditions:
\begin{enumerate}[label=(\roman*)]
    \item If $\mu$ is a cycle of $\Gamma(\A,B)$, then $ \mu^0 \cap  \lambda_{\m}(B) = \emptyset$. \label{cor:Lmciclo1} 
    \item If $i \in \Lambda$ is  a cyclic vertex, then $i \not\in \lambda_{\m}(B) $. \label{cor:Lmciclo2}
    \item If $i \in \lambda_{\m}(B) \setminus \lambda_1(B)$, then there exist a path from  $i$ to some $j \in \lambda_1(B)$. \label{cor:Lmciclo3}
\end{enumerate}
\end{corollary}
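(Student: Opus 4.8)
The plan is to work entirely at the level of the sets $\lambda_k(B)$, exploiting that $\m = \asi(\A)$ forces the increasing chain \eqref{eq:lambdaj} to stabilize exactly at step $\m$. So the first step I would record is the stabilization: by Proposition \ref{anni=spanlambdai}, the equality $\ann^{(\m)}(\A)=\ann^{(\m+1)}(\A)$ translates into $\lambda_\m(B)=\lambda_{\m+1}(B)$, and since each $\lambda_{k+1}(B)$ depends only on $\lambda_k(B)$, the sequence is constant from $\m$ onward. In particular $\lambda_\m(B)=\{i\in\Lambda \tq D^1(i)\subseteq\lambda_\m(B)\}$, which immediately gives the closure property that $i\in\lambda_\m(B)$ implies $D^1(i)\subseteq\lambda_\m(B)$, and hence $D(i)\subseteq\lambda_\m(B)$ by induction on the generation.

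The main device I would introduce is a level function $\ell\colon\lambda_\m(B)\to\N^*$, $\ell(i)=\min\{k\ge 1 \tq i\in\lambda_k(B)\}$. Its key property is strict descent along arrows inside $\lambda_\m(B)$: if $\ell(i)=k>1$ then $D^1(i)\subseteq\lambda_{k-1}(B)$, so every $j\in D^1(i)$ has $\ell(j)\le k-1<\ell(i)$; and if $\ell(i)=1$ then $e_i^2=0$, so $i$ has no outgoing arrow at all. All three assertions follow from this descent together with the closure property above.

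For (i) I argue by contradiction. If some vertex $v$ of a cycle $\mu$ lay in $\lambda_\m(B)$, then $v$ has a successor $v'$ on the cycle with $v'\in D^1(v)$; since $v$ has an outgoing arrow we get $\ell(v)>1$, so $v'\in\lambda_\m(B)$ with $\ell(v')<\ell(v)$, and $v'$ is again a cycle vertex. Following the cycle one full turn therefore keeps us inside $\lambda_\m(B)$ while strictly decreasing $\ell$ at each step, yet returns to $v$, forcing $\ell(v)<\ell(v)$, a contradiction. Hence $\mu^0\cap\lambda_\m(B)=\emptyset$. For (ii), suppose a cyclic vertex $i$ lies in $\lambda_\m(B)$; by closure $D(i)\subseteq\lambda_\m(B)$, and since $i$ reaches some vertex $z$ of a cycle $\theta$ (with $z=i$ or $z\in D(i)$), we obtain $z\in\theta^0\cap\lambda_\m(B)$, contradicting (i). For (iii), given $i\in\lambda_\m(B)\setminus\lambda_1(B)$ I would build a path by repeatedly choosing, from any vertex not yet in $\lambda_1(B)$, a first-generation descendant of strictly smaller level, which exists because such a vertex has $e^2\neq 0$ and hence nonempty $D^1$. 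The levels form a strictly decreasing sequence of positive integers, so the construction halts, necessarily at a vertex of level $1$, i.e. in $\lambda_1(B)$; the visited vertices form the required path.

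The main obstacle is (i): one should resist trying to show that every cycle vertex lies in $\lambda_\m(B)$ and instead notice that a single vertex in the intersection already propagates around the whole cycle via the descent of $\ell$, producing the impossible strictly decreasing loop. Once (i) is set up this way, (ii) and (iii) become short consequences of the same level bookkeeping.
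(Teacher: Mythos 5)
Your proof is correct, and its engine --- strict descent of ``$\lambda$-levels'' along arrows of $\Gamma(\A,B)$ --- is the same mechanism that drives the paper's proof, but you justify two key steps differently, in a way that makes the argument more self-contained. First, the closure property ($i \in \lambda_{\m}(B)$ implies $D(i) \subseteq \lambda_{\m}(B)$): you derive it purely combinatorially from the stabilization $\lambda_{\m}(B)=\lambda_{\m+1}(B)$, which you correctly extract from Proposition \ref{anni=spanlambdai} together with the definition of $\asi(\A)$ (spans of subsets of a basis coincide iff the subsets do); the paper instead obtains this fact inside its proof of \ref{cor:Lmciclo2} by appealing to $\rad(\A)=\span\{e_i \tq i \in \lambda_{\m}(B)\}$ being an ideal with the extension property, i.e.\ via Corollary \ref{cor:rad=annm}. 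Your route never mentions the radical and stays entirely within the bookkeeping of Remark \ref{obs:deflambda}. Second, for item \ref{cor:Lmciclo3} the paper combines item \ref{cor:Lmciclo2} with Remark \ref{rem:propciclico} (an acyclic vertex has $D^k(i)=\emptyset$ for some $k$) and takes the last nonempty generation of descendants, whereas you build the path directly by repeatedly stepping to a first-generation descendant of strictly smaller level, terminating because levels are positive integers; your version needs neither item \ref{cor:Lmciclo2} nor that Remark as input. For item \ref{cor:Lmciclo1}, your ``one full turn around the cycle strictly decreases $\ell$'' contradiction is the same idea the paper phrases more succinctly by taking $k=\min\{t \in \N^* \tq \lambda_t(B)\cap\mu^0\neq\emptyset\}$ and observing that the cycle-successor of a vertex in $\lambda_k(B)\cap\mu^0$ lies in $\lambda_{k-1}(B)\cap\mu^0$. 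In short: same descent idea, but your packaging (fixed-point characterization of $\lambda_{\m}(B)$ plus a level function) buys independence from the radical machinery and from Remark \ref{rem:propciclico}, at the cost of slightly longer bookkeeping.
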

\begin{proof}
\ref{cor:Lmciclo1} Let $\mu $ be a cycle of  $\Gamma (\A,B)$. Note that, since all elements of $\lambda_1(B)$ are sinks in $\Gamma (\A,B)$ then  $\mu^0\cap \lambda_1(B)=\emptyset$. 
Suppose that  $\mu^0\cap \lambda_{\m}(B) \neq \emptyset$ and let  $k=\min\{t \in \mathbb{N}^* \tq \lambda_t(B) \cap \mu^0 \neq \emptyset\}$. Then  there is  $v \in \mu^0\cap \lambda_k(B)$ and 
therefore  $D^1(v) \subseteq \lambda_{k-1}(B)$ and  consequently $\mu^0\cap \lambda_{k-1}(B) \neq \emptyset$, which is a contradiction by minimality of $k$. Thus $\mu^0\cap \lambda_{\m}(B) = \emptyset$. 
\noindent \ref{cor:Lmciclo2} Let $i$ be a cyclic vertex. Then there is $j\in \Lambda$ such that $j \in \mu^0$ where $\mu$ a cycle and a path $\theta$ from $i$ to $j$ in $\Gamma (\A,B)$. By  item \ref{cor:Lmciclo1}  we know that $j \not\in \lambda_{\m}(B)$.  Let us assume that $i\in \lambda_{\m}(B)$. Since that $\rad(\A)$ has the extension property, we have that $D(i)\subseteq \lambda_{\m}(B)$. Therefore $j \in D(i)\subseteq \lambda_{\m}(B)$, which is a contradiction.


\noindent \ref{cor:Lmciclo3} If $i \in \lambda_{\m}(B) \setminus \lambda_1(B)$, by \ref{cor:Lmciclo2}, there is no path from $i$ to a cycle of $\Gamma (\A,B)$. On the other hand, by Remark \ref{rem:propciclico} there exist  $k\in \N^*$ such that  $D^k(i)=\emptyset$. If $t=\min \{k \in \N^*\tq D^k(i)=\emptyset\}$ and  $ j \in D^{t-1}(i)$ then there exist a path from  $i$ to $j$ and $D^1(j)=\emptyset$, i.e. $j \in \lambda_1(B)$.
\end{proof}

\begin{remark} \label{rem:propciclico}

Let $G=(V,E)$ be a graph and $i \in V$. We claim that  $D^k(i)\neq \emptyset $ for  all $k \in \N^*$ if and only if $i$ is a cyclic vertex. Indeed, if  $D^k(i)\neq \emptyset $ for  all $k \in \N^*$ then, as $V$ is finite, there are $k_1,k_2 \in \N^*$,  $k_1 < k_2$  such that $D^{k_1}(i) \cap D^{k_2}(i) \neq \emptyset$. If $j \in D^{k_1}(i) \cap D^{k_2}(i)$ then  $j\in D^{k_2}(i)= D^{k_2-k_1}(D^{k_1}(i))$ and  $j\in D^{k_1}(i)$, therefore there exist a path form $j$ to $j$, as required. For the reciprocal, let $k\ \in \N^{*}$ and  $\theta=v_0v_1 \dots v_{n-1}v_n$ be a cycle of $G$ such that there is a path  $\mu=u_0u_1\dots u_t$, with $u_0=i$, and $u_t \in \theta^0$.  If $k \leq t$ then $u_k\in D^k(i)\neq \emptyset$, otherwise  if $k > t$ then $v_j \in D^k(i)$ for some $j \in \{0,1,\dots, n\}$, and $D^k(i) \neq \emptyset$ as claimed. 
\end{remark}

\begin{theorem} \label{teo:rad}
Let $\A$ be an evolution algebra with a natural basis  $B=\{e_i\}_{i \in \Lambda}$, $\m=\asi(\A)$ and $\rad(\A)=\span \{e_i\}_{i \in \LLm }$. Then $i \in \lambda_{\m}(B)$ if, and only if, $i$ is an acyclic vertex. Therefore, $$\rad(\A)= \span \{e_i \tq i \text{  is an acyclic vertex of }\Gamma(\A,B)\}.$$
\end{theorem}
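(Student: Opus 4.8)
The plan is to reduce the theorem to a single combinatorial identification: that the index set $\lambda_{\m}(B)$ coincides with the set of acyclic vertices of $\Gamma(\A,B)$. Indeed, Corollary \ref{cor:rad=annm} already gives $\rad(\A)=\span\{e_i \tq i\in\lambda_{\m}(B)\}$, so $\LLm=\lambda_{\m}(B)$, and the final displayed formula follows immediately once we establish the equivalence: for $i\in\Lambda$, one has $i\in\lambda_{\m}(B)$ if and only if $i$ is acyclic. I would prove the two implications separately.

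For the forward implication I would simply invoke item \ref{cor:Lmciclo2} of Corollary \ref{cor:Lmciclo} in contrapositive form: it asserts that a cyclic vertex never lies in $\lambda_{\m}(B)$, so any $i\in\lambda_{\m}(B)$ is necessarily acyclic. This direction needs no further argument.

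The substantive part is the converse, that every acyclic vertex belongs to $\lambda_{\m}(B)$. Here I would first use Remark \ref{rem:propciclico}: since $i$ is acyclic, $D^k(i)=\emptyset$ for some $k\in\N^*$, so I may set $t=\min\{k\in\N^* \tq D^k(i)=\emptyset\}$ and argue by induction on $t$ that $i\in\lambda_t(B)$. For $t=1$ we have $D^1(i)=\emptyset$, i.e. $e_i^2=0$, whence $i\in\lambda_1(B)$ by definition. For the inductive step, take any $j\in D^1(i)$; then $D^{t-1}(j)\subseteq D^t(i)=\emptyset$, so the minimal vanishing index of $j$ is at most $t-1$, and the induction hypothesis—combined with the nesting $\lambda_1(B)\subseteq\lambda_2(B)\subseteq\cdots$ recorded in Remark \ref{obs:deflambda}—yields $j\in\lambda_{t-1}(B)$. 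Hence $D^1(i)\subseteq\lambda_{t-1}(B)$, which is precisely the defining condition for $i\in\lambda_t(B)$.

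Finally I would conclude via $i\in\lambda_t(B)\subseteq\lambda_{\m}(B)$. The inclusion $\lambda_t(B)\subseteq\lambda_{\m}(B)$ holds because the chain \eqref{eq:lambdaj} is increasing and, by Proposition \ref{anni=spanlambdai} together with the definition of $\m=\asi(\A)$, stabilizes at index $\m$; thus $\lambda_k(B)=\lambda_{\m}(B)$ for all $k\ge\m$, while $\lambda_k(B)\subseteq\lambda_{\m}(B)$ for $k\le\m$. I expect the only delicate point to be this last step: one must ensure that the induction lands in the correct, \emph{stabilized} term of the series rather than merely in some $\lambda_t(B)$, which is exactly where the translation $\dim\ann^{(i)}(\A)=|\lambda_i(B)|$ from Proposition \ref{anni=spanlambdai} and the stabilization of the upper annihilating series at $\asi(\A)$ are used. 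Once both inclusions are in place, substituting $\lambda_{\m}(B)=\{\,i \tq i \text{ acyclic}\,\}$ into Corollary \ref{cor:rad=annm} gives the stated description of $\rad(\A)$.
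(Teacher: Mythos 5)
Your proposal is correct, and its skeleton matches the paper's: the forward implication is exactly the paper's appeal to Corollary \ref{cor:Lmciclo} \ref{cor:Lmciclo2}, and the converse likewise starts from Remark \ref{rem:propciclico} and the minimal $k$ with $D^k(i)=\emptyset$. Where you diverge is in how the converse is closed. The paper makes a single observation --- writing $r$ for that minimal index, every $j\in D^{r-1}(i)$ has $D^1(j)=\emptyset$, so $D^{r-1}(i)\subseteq\lambda_1(B)\subseteq\lambda_{\m}(B)$ --- and then invokes Corollary \ref{cor:propriedades} to pull $i$ back into $\lambda_{\m}(B)$; that corollary's own proof is an induction resting on the absorption property of $\rad(\A)$ (via Proposition \ref{prop:PAsessD(j)nsubLambdaI}). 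You instead re-prove this pull-back inline: a purely combinatorial strong induction on the minimal vanishing index showing $i\in\lambda_t(B)$, using only the recursive definition of the sets $\lambda_k(B)$ and the nesting from Remark \ref{obs:deflambda}, followed by the stabilization $\lambda_k(B)=\lambda_{\m}(B)$ for $k\ge\m$ (which, as you note, is where Proposition \ref{anni=spanlambdai} and the definition of $\asi(\A)$ enter, since spans of subsets of a basis coincide only if the subsets do). The trade-off: your argument is self-contained and never touches the algebraic absorption property at this stage, at the cost of having to handle the stabilization of the chain explicitly --- precisely the delicate point you flag; the paper's route is shorter because Corollary \ref{cor:propriedades} works directly with $\lambda_{\m}(B)$ and so sidesteps any question about which term of the chain the induction lands in. Both are valid proofs of the theorem.
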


\begin{proof}
If $i \in \lambda_{\m}(B)$ then $i$ is an acyclic  vertex by Corollary \ref{cor:Lmciclo} \ref{cor:Lmciclo2}. Conversely, suppose that $i \in \Lambda$ is  acyclic. Then  Remark \ref{rem:propciclico} say that there exists $k \in \N^*$ such that $D^k(i)=\emptyset$. If we define $r=\min \{k \in \N^* \tq D^k(i)=\emptyset \}$  then $D^{r-1}(i)\neq \emptyset$ and $D^1(D^{r-1}(i))= \emptyset$. Therefore  $D^{r-1}(i) \subseteq \lambda_1(B) \subseteq \lambda_{\m}(B)$ and by Corollary \ref{cor:propriedades} we obtain that $i \in \lambda_{\m}(B)$.
\end{proof}

\begin{example}
Let us consider again algebra $\A$ from Example \ref{exa:radical}.
 As $\{ 1, 2\}$ is the set of acyclic vertices of $\Gamma(\A,B)$, then by Theorem \ref{teo:rad} we obtain that $\rad(\A)=\span \{e_1,e_2 \}$.
\end{example}
\begin{corollary} Let $\A$ be an evolution algebra. Then $\rad(A)$ is nilpotent.
\end{corollary}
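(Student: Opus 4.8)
The plan is to realize $\rad(\A)$ as a nilpotent algebra by treating the upper annihilating series of $\A$ as an ascending filtration of $\rad(\A)$ and then counting the ``depth'' of arbitrary products. Write $R=\rad(\A)$ and $\m=\asi(\A)$. By Corollary~\ref{cor:rad=annm} (equivalently \cite{cabrera21}) we have $R=\ann^{(\m)}(\A)$, so $R$ sits at the top of the finite ascending chain
\[
\{0\}=\ann^{(0)}(\A)\subseteq \ann^{(1)}(\A)\subseteq\cdots\subseteq \ann^{(\m)}(\A)=R .
\]
Setting $I_k:=\ann^{(k)}(\A)$ for $0\le k\le \m$ (and $I_j:=\{0\}$ for $j\le 0$), I would show that multiplying any element of $R$ lowers the filtration level by one, and that iterating this $\m$ times annihilates every product of elements of $R$.

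The key algebraic input is the characterization $\ann^{(k)}(\A)=\{x\in\A : x\A\cup\A x\subseteq \ann^{(k-1)}(\A)\}$ already recalled in the proof of Proposition~\ref{anni=spanlambdai} (from \cite{cabrera21}). From it, every $x\in I_k$ satisfies $x\A\subseteq I_{k-1}$, hence $I_k\,\A\subseteq I_{k-1}$ and in particular $I_k\,R\subseteq I_{k-1}$ because $R\subseteq\A$. Since evolution algebras are commutative, this also reads $R\,I_k\subseteq I_{k-1}$ for every $k\ge 1$, so the displayed chain is a genuine annihilating filtration of $R$: each quotient $I_k/I_{k-1}$ is killed by $R$.

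To finish, I would prove by induction on $n$ that every product of $n$ elements of $R$, under any parenthesization, lies in $I_{\m-(n-1)}$. For $n=1$ this is just $R=I_{\m}$. For the inductive step, write such a product as $PQ$, where $P$ is a product of $a\ge 1$ factors and $Q$ a product of $b\ge 1$ factors with $a+b=n$; by the inductive hypothesis $P\in I_{\m-(a-1)}$, while $Q\in R$, so $PQ=QP\in R\,I_{\m-(a-1)}\subseteq I_{\m-a}$. Choosing $a=n-1$ gives $PQ\in I_{\m-(n-1)}$, completing the induction. Taking $n=\m+1$ yields $I_{0}=\{0\}$, i.e.\ every product of $\m+1$ elements of $R$ vanishes, so $R$ is nilpotent.

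The only genuinely delicate point, and hence the main obstacle, is the arbitrariness of the parenthesization in the definition of nilpotency: one application of $R\,I_k\subseteq I_{k-1}$ only drops the level by one, so one must ensure an unbalanced product cannot stall. The depth-counting induction above resolves this by peeling off a single full factor $Q\in R$ while letting the level already accumulated inside $P$ carry over, and this is precisely where commutativity of $\A$ enters. As a conceptual alternative I would note that the associated graph of $R$ is the full subgraph of $\Gamma(\A,B)$ on the acyclic vertices $\LLm$, which is itself acyclic; applying Theorem~\ref{teo:rad} to $R$ then gives $\rad(R)=R$, after which the same series argument yields nilpotency.
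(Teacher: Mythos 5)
There is a genuine gap: the induction at the heart of your argument is invalid, and the claim it is meant to establish is false. In the inductive step the split $a+b=n$ is dictated by the given parenthesization, so you are not free to ``choose $a=n-1$''; for a balanced product the estimate degrades, because one application of $R\,I_k\subseteq I_{k-1}$ records the depth of only one factor — the depths accumulated inside $P$ and inside $Q$ do not add. Concretely, let $\A$ have natural basis $\{e_1,e_2,e_3,e_4\}$ with $e_1^2=0$, $e_2^2=e_1$, $e_3^2=e_2$, $e_4^2=e_3$. The graph is acyclic, so $R=\rad(\A)=\A$, $I_k=\ann^{(k)}(\A)=\span\{e_1,\dots,e_k\}$, and $\m=\asi(\A)=4$. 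The product $(e_4^2)(e_4^2)=e_3e_3=e_2$ has $n=4$ factors but lies only in $I_2$, not in $I_{\m-(n-1)}=I_1$; worse, $\bigl((e_4^2)(e_4^2)\bigr)e_2=e_2e_2=e_1\neq 0$ is a non-vanishing product of $\m+1=5$ elements of $R$, contradicting your final conclusion that all such products vanish. Your filtration idea can be repaired: the correct invariant (proved by the same induction, using $\max(a,b)\ge n/2$ and commutativity) is that every product of $n$ elements of $R$, in any parenthesization, lies in $I_{\m-\lfloor\log_2 n\rfloor}$; hence every product of $2^{\m}$ elements of $R$ vanishes and $R$ is nilpotent, with index bounded by $2^{\m}$ rather than $\m+1$.

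For comparison, the paper's proof is a two-line citation: by Theorem \ref{teo:rad}, $\rad(\A)$ is spanned by the acyclic vertices, so its associated graph — the full subgraph of $\Gamma(\A,B)$ on those vertices, cf.\ Corollary \ref{prop:relacaoentregrafos} — has no cycles, and \cite[Theorem 3.4]{Elduque/Labra/2015} states that an evolution algebra whose associated graph has no cycles is nilpotent. Your closing ``conceptual alternative'' begins exactly this way (acyclicity of the subgraph, $\rad(R)=R$), but then falls back on ``the same series argument'', i.e.\ on the flawed induction, so as written it inherits the gap. Either repair the series argument with the logarithmic bound above, or finish the graph-theoretic route by invoking the cycle criterion of Elduque and Labra instead of re-deriving nilpotency by hand.
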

\begin{proof} Follows directly from Theorem \ref{teo:rad}  and \cite[Theorem 3.4]{Elduque/Labra/2015}.
\end{proof}
\begin{corollary}\label{cor:a=radnilpotente}
Let be $\A$ an evolution algebra with a natural basis $B$. Then the following conditions are equivalents: \vspace{-6pt}
\begin{enumerate}[label=(\roman*),noitemsep]
    \item $\A=\rad (\A)$. \label{cor:radnil1}
    \item $\Gamma (\A, B)$ has no cycles. \label{cor:radnil2}
    \item $\A$ is nilpotent.\label{cor:radnil3}
\end{enumerate}
\end{corollary}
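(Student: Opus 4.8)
The plan is to prove the equivalences via the chain \ref{cor:radnil1} $\Leftrightarrow$ \ref{cor:radnil2} $\Leftrightarrow$ \ref{cor:radnil3}, letting Theorem \ref{teo:rad} carry the first equivalence and the known graph criterion for nilpotency carry the second.

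First I would establish \ref{cor:radnil1} $\Leftrightarrow$ \ref{cor:radnil2}. By Theorem \ref{teo:rad} we have $\rad(\A) = \span\{e_i \tq i \text{ is an acyclic vertex}\}$, and since the $e_i$ form a basis, $\A = \rad(\A)$ holds exactly when every index $i \in \Lambda$ is an acyclic vertex. It then suffices to observe that every vertex being acyclic is the same as $\Gamma(\A,B)$ having no cycles: if a cycle $\mu$ existed, any $v \in \mu^0$ would be cyclic via the trivial path from $v$ to itself, so some vertex would fail to be acyclic; conversely, by Remark \ref{rem:propciclico} a cyclic vertex would produce a path to a cycle, and hence a cycle, so the absence of cycles forces every vertex to be acyclic.

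Next I would invoke \cite[Theorem 3.4]{Elduque/Labra/2015}, which is precisely the statement that a finite-dimensional evolution algebra is nilpotent if and only if its associated graph has no cycles; this gives \ref{cor:radnil2} $\Leftrightarrow$ \ref{cor:radnil3} and closes the argument.

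Because Theorem \ref{teo:rad} already supplies the graph description of $\rad(\A)$ and \cite[Theorem 3.4]{Elduque/Labra/2015} already supplies the nilpotency criterion, the proof is short. The only genuinely new point to verify is the elementary equivalence between ``all vertices acyclic'' and ``no cycles,'' and I expect the only (minor) subtlety there to be remembering that a vertex sitting on a cycle is automatically cyclic, so that the absence of cyclic vertices really does coincide with the absence of cycles.
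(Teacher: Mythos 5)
Your proof is correct and takes essentially the same approach as the paper: the paper's own proof is precisely the two-step reduction you describe, citing Theorem \ref{teo:rad} for the equivalence of \ref{cor:radnil1} and \ref{cor:radnil2}, and \cite[Theorem 3.4]{Elduque/Labra/2015} for the equivalence of \ref{cor:radnil2} and \ref{cor:radnil3}. The only difference is that you spell out the elementary fact that ``every vertex is acyclic'' coincides with ``$\Gamma(\A,B)$ has no cycles,'' a detail the paper leaves implicit.
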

\begin{proof}
The equivalence of \ref{cor:radnil1} and \ref{cor:radnil2} follows from Theorem \ref{teo:rad} and the equivalence of  \ref{cor:radnil2} and \ref{cor:radnil3}
follows from \cite[Theorem 3.4]{Elduque/Labra/2015}.
\end{proof}

\begin{corollary}\label{cor:rad_nilpotente}
Let $\A$ be an evolution algebra with a natural basis $B=\{e_i\}_{i \in \Lambda}$. Then $\rad (\A)=\ann (\A)$ if, and only if, $i$ is cyclic for every $i \in \Lambda\setminus \lambda_1(B)$.
\end{corollary}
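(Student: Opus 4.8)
The plan is to reduce the algebra identity $\rad(\A) = \ann(\A)$ to the set equality $\lambda_{\m}(B) = \lambda_1(B)$, and then to translate that equality into the graph-theoretic condition via Theorem \ref{teo:rad}. First I would recall that Proposition \ref{anni=spanlambdai}, applied with $i = 1$, gives $\ann(\A) = \ann^{(1)}(\A) = \span\{e_i \tq i \in \lambda_1(B)\}$, while Corollary \ref{cor:rad=annm} gives $\rad(\A) = \span\{e_i \tq i \in \lambda_{\m}(B)\}$. Since both subspaces are spanned by subsets of the basis $B$, the linear independence of $B$ shows that $\rad(\A) = \ann(\A)$ holds if and only if $\lambda_{\m}(B) = \lambda_1(B)$.

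Next I would observe that the inclusion $\lambda_1(B) \subseteq \lambda_{\m}(B)$ is automatic: it is the first inclusion of the increasing chain \eqref{eq:lambdaj} established in Remark \ref{obs:deflambda}. (Concretely, each $i \in \lambda_1(B)$ satisfies $e_i^2 = 0$, hence $D^1(i) = \emptyset$, so $i$ is a sink and therefore acyclic by Remark \ref{rem:propciclico}.) Consequently the equality $\lambda_{\m}(B) = \lambda_1(B)$ is equivalent to the single reverse inclusion $\lambda_{\m}(B) \subseteq \lambda_1(B)$.

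Finally I would invoke Theorem \ref{teo:rad}, which identifies $\lambda_{\m}(B)$ with the set of acyclic vertices of $\Gamma(\A,B)$. Under this identification the inclusion $\lambda_{\m}(B) \subseteq \lambda_1(B)$ asserts precisely that every acyclic vertex lies in $\lambda_1(B)$; taking the contrapositive, this is the same as saying that every vertex $i \in \Lambda \setminus \lambda_1(B)$ is cyclic. Chaining the three equivalences yields that $\rad(\A) = \ann(\A)$ if and only if $i$ is cyclic for every $i \in \Lambda \setminus \lambda_1(B)$, which is the claim.

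I do not expect a genuine obstacle here, as the statement is essentially bookkeeping on top of Theorem \ref{teo:rad} together with the trivial inclusion $\lambda_1(B) \subseteq \lambda_{\m}(B)$. The only points requiring minor care are that the span equality of subspaces descends to equality of the indexing vertex sets (using the linear independence of $B$), and that sinks are automatically acyclic, so that $\lambda_1(B)$ is always contained in the set of acyclic vertices and only the reverse inclusion is at issue.
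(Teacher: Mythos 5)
Your proof is correct and is exactly the argument the paper leaves implicit: the paper states this as an unproved corollary of Theorem \ref{teo:rad}, and your chain of equivalences ($\rad(\A)=\ann(\A)$ iff $\lambda_{\m}(B)=\lambda_1(B)$ iff $\lambda_{\m}(B)\subseteq\lambda_1(B)$ iff every vertex outside $\lambda_1(B)$ is cyclic) is the intended derivation, using Proposition \ref{anni=spanlambdai}, Corollary \ref{cor:rad=annm}, the chain \eqref{eq:lambdaj}, and Theorem \ref{teo:rad} just as the paper would. The only difference is that you spell out the bookkeeping (descent from span equality to index-set equality, and acyclicity of sinks) that the paper omits entirely.
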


\begin{proposition}\label{prop:EPthennil}
Let $\A$ be an evolution algebra and let  $I$ be an ideal having the extension property. Then $I$ is nilpotent if and only if $I \subseteq  \rad (\A)$.
\end{proposition}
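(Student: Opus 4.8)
The plan is to exploit the graph-theoretic descriptions of both nilpotency (Corollary~\ref{cor:a=radnilpotente}) and of the absorption radical (Theorem~\ref{teo:rad}), reducing the proposition to a statement about acyclicity of vertices. Since $I$ has the extension property, I would first fix a natural basis $B=\{e_i\}_{i\in\Lambda}$ of $\A$ extending a natural basis of $I$, so that $I=\span\{e_i\}_{i\in\Lambda'}$ for some $\Lambda'\subseteq\Lambda$. Because $I$ is an ideal, $e_i^2\in I$ for each $i\in\Lambda'$, which forces $D^1(i)\subseteq\Lambda'$ and, by iterating, $D(i)\subseteq\Lambda'$ for every $i\in\Lambda'$. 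Consequently the graph $\Gamma(I,B')$ of $I$ relative to $B'=\{e_i\}_{i\in\Lambda'}$ is exactly the full subgraph of $\Gamma(\A,B)$ induced on $\Lambda'$, and this subgraph already contains every vertex and every arrow lying on any path issued from a vertex of $\Lambda'$.

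For the implication $I\subseteq\rad(\A)\Rightarrow I$ nilpotent, the quickest route is to recall that $\rad(\A)$ is nilpotent, which was established above. Since $I$ is a subalgebra of $\rad(\A)$ carrying the same multiplication, the powers of $I$ are contained in the corresponding powers of $\rad(\A)$, so $I$ inherits nilpotency. Alternatively one can argue on the graph: if $I\subseteq\rad(\A)$ then $\Lambda'\subseteq\lambda_{\m}(B)$, hence every vertex of $\Lambda'$ is acyclic by Theorem~\ref{teo:rad}, so $\Gamma(I,B')$ has no cycles and $I$ is nilpotent by Corollary~\ref{cor:a=radnilpotente} applied to $I$.

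For the converse I would suppose $I$ nilpotent and apply Corollary~\ref{cor:a=radnilpotente} to $I$ itself, obtaining that $\Gamma(I,B')$ has no cycles. The next step is to show that each $i\in\Lambda'$ is acyclic in the \emph{ambient} graph $\Gamma(\A,B)$: if some $i\in\Lambda'$ were cyclic, there would be a path from $i$ to a cycle $\mu$ of $\Gamma(\A,B)$, but every vertex on that path and on $\mu$ is a descendant of $i$, hence lies in $\Lambda'$; thus $\mu$ would already be a cycle of the subgraph $\Gamma(I,B')$, contradicting its acyclicity. Therefore $\Lambda'\subseteq\lambda_{\m}(B)$, and Theorem~\ref{teo:rad} gives $I=\span\{e_i\}_{i\in\Lambda'}\subseteq\span\{e_i\}_{i\in\lambda_{\m}(B)}=\rad(\A)$.

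The crux of the argument, and the step I expect to require the most care, is the comparison between cyclicity of a vertex in $\Gamma(\A,B)$ and in the induced subgraph $\Gamma(I,B')$. This equivalence fails for an arbitrary subset $\Lambda'$, and it is precisely the ideal hypothesis on $I$ (through $D(i)\subseteq\Lambda'$ for $i\in\Lambda'$) that guarantees no path escapes $\Lambda'$, so that every cycle reachable from a vertex of $\Lambda'$ already lives inside $\Lambda'$. Once this closure property is in hand, both implications follow mechanically from the two cited results.
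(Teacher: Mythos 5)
Your proof is correct and follows essentially the same route as the paper: both arguments fix a basis given by the extension property, reduce nilpotency of $I$ to acyclicity of $\Gamma(I,B')$ (the paper cites Elduque--Labra directly, you cite Corollary~\ref{cor:a=radnilpotente}, which is the same criterion), then apply Theorem~\ref{teo:rad}, and both obtain the reverse implication from the nilpotency of $\rad(\A)$. Your write-up is in fact slightly more complete than the paper's: the closure property $D(i)\subseteq\Lambda'$ for $i\in\Lambda'$, which you use to transfer acyclicity from the induced subgraph $\Gamma(I,B')$ to the ambient graph $\Gamma(\A,B)$, is precisely the step the paper leaves implicit when it passes from ``$\Gamma(I,B')$ has no cycles'' to ``$\Lambda'\subseteq\lambda_{\m}(B)$''.
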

\begin{proof} 
As $I$ has the extension property then there exists natural basis $B=\{e_i\}_{i \in \Lambda }$ of $\A$ and $B'=\{e_i\}_{i \in \Lambda' }$ of $I$ such that $\Lambda'\subseteq \Lambda$. If $I$ is nilpotent then the graph $\Gamma(I,B')$ has no cycles (see \cite[Theorem 3.4]{Elduque/Labra/2015}). Therefore using  Theorem \ref{teo:rad}  we have that $\Lambda' \subseteq \lambda_{\m}(B)$,  where $\m=\asi(\A)$.  The other implications follows from the fact that $\rad(\A)$ is nilpotent by Corollary \ref{cor:rad_nilpotente}.
\end{proof}

The hypothesis of that the ideal $I$ has the extension property cannot be eliminated in Proposition \ref{prop:EPthennil}, as we will see in the following example.

\begin{example} \label{exa:radical2}
Let $\A$ be an evolution algebra and let $B=\{e_1, e_2,e_3\}$ a natural basis such that $e_1^2=e_2^2=-e_3^2=e_2+e_3.$
Consider the ideal $I=\span \{e_2+e_3\}$ (that has not the extension property) and note that $I$ is nilpotent, since that $I^2=0$. Finally, observe that $I\not\subseteq \rad (\A)$ because $\ann (\A)=\rad (\A)=\{ 0\}$.
\end{example}

\section{Decomposability of degenerate evolution algebras} \label{decomposability}
In this section we are going to present some results that characterize decomposability of degenerate evolution algebras, under certain conditions, using the absorption radical. 
If $\A$ is a non-degenerate evolution algebra then, by \cite[Propostion 2.8]{Elduque/Labra/2015}, their decomposability can be determined studying the connectedness of the associated graph. Furthermore, by \cite[Propostion 2.10]{Elduque/Labra/2015},  if $\A$ is degenerate and has a natural basis $B$ such that $\Gamma(\A,B)$ is not connect, then $\A$ is decomposable. So we will focus on degenerate evolution algebras with a natural basis such that the associated graph is connected.


If $I$ is an evolution ideal with extension property of an evolution algebra $\A$, the next proposition establish a relation between the structure matrices of $\A$,  $I$ and $\A/I$. Furthermore, a consequence of such relation is presented in Proposition \ref{prop:relacaoentregrafos}, when $I=\rad(\A)$.

\begin{proposition}\label{prop:matrizblocosrad}
Let $\A$ be an evolution algebra, $I$ an ideal having the extension property and $B=\{e_i\}_{i \in \Lambda}$ a natural basis of $\A$ such that $I=\span \{ e_i \tq i \in \Lambda'\}$ and $\Lambda' \subseteq \Lambda$. 
If $B$ is reordered in such a way that $i < j$ for all $i\in \Lambda'$ and $j \in\Lambda\setminus \Lambda'$, then there exist natural basis  $B'$ and $\overline{B}$ of $I$ and $\A/I$, respectively, such that $M_B$ has the shape:
\begin{equation}
\label{eq:matrizblocosrad}
M_B=\left(
\begin{array}{cc}
    M_{B'} & 0 \\
    X & M_{\overline{B}}
\end{array}
\right),\end{equation}
where $M_{B'}$ is the structure matrix of  $I$  related to $B'$ and $M_{\overline{B}}$ is the structure matrix of  $\A/I$ related to $\overline{B}$.
\end{proposition}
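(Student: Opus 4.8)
The plan is to exhibit the two required bases explicitly, as the restriction of $B$ to $\Lambda'$ and as the image of the remaining basis vectors in the quotient, and then to read off the block shape directly from the multiplication table of $B$.

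First I would set $B'=\{e_i \tq i\in\Lambda'\}$ and $\overline{B}=\{\overline{e_j} \tq j\in\Lambda\setminus\Lambda'\}$, where $\overline{e_j}=e_j+I$. Since $I$ has the extension property and $I=\span\{e_i\}_{i\in\Lambda'}$, the family $B'$ is a natural basis of $I$: the relations $e_ie_k=0$ for distinct $i,k\in\Lambda'$ are inherited from $B$, and $e_i^2\in I$ for $i\in\Lambda'$. Likewise $\overline{B}$ is a natural basis of $\A/I$, because the classes $\overline{e_j}$ with $j\in\Lambda\setminus\Lambda'$ form a basis of the quotient, $\overline{e_i}\,\overline{e_j}=\overline{e_ie_j}=0$ for $i\neq j$, and $\overline{e_j}^{\,2}=\overline{e_j^2}$.

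The heart of the argument is the single observation that $I$ is a subalgebra: for every $i\in\Lambda'$ we have $e_i^2\in I$, so writing $e_i^2=\sum_{k\in\Lambda}w_{ik}e_k$ forces $w_{ik}=0$ whenever $k\in\Lambda\setminus\Lambda'$. After the prescribed reordering (the indices of $\Lambda'$ placed first), this says exactly that the upper-right block of $M_B$ vanishes, giving the zero block of Eq.~\eqref{eq:matrizblocosrad}. It then remains to identify the three nonzero blocks. The upper-left block is the submatrix $(w_{ik})_{i,k\in\Lambda'}$, and since $e_i^2=\sum_{k\in\Lambda'}w_{ik}e_k$ for $i\in\Lambda'$, it coincides with $M_{B'}$. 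For the lower-right block I would compute, for $j\in\Lambda\setminus\Lambda'$, that $\overline{e_j}^{\,2}=\sum_{k\in\Lambda}w_{jk}\overline{e_k}=\sum_{k\in\Lambda\setminus\Lambda'}w_{jk}\overline{e_k}$, the terms with $k\in\Lambda'$ disappearing because $\overline{e_k}=0$ there; hence this block equals $M_{\overline{B}}$. The remaining lower-left block $(w_{jk})_{j\in\Lambda\setminus\Lambda',\,k\in\Lambda'}$ is unconstrained and is simply recorded as $X$.

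No step presents a genuine obstacle; the argument is almost entirely bookkeeping. The only care required is to confirm that $\overline{B}$ is truly a natural basis of $\A/I$ — namely the linear independence of the classes $\overline{e_j}$ and the vanishing of their off-diagonal products — so that $M_{\overline{B}}$ is well defined and its entries match the lower-right block of $M_B$. All the conceptual content is concentrated in the ideal property that produces the top-right zero block.
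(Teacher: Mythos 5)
Your proof is correct and follows essentially the same route as the paper's: use the ideal property to get $e_i^2\in I$ for $i\in\Lambda'$ (hence the zero upper-right block and the identification of the upper-left block with $M_{B'}$), then compute $\overline{e_j}^{\,2}=\sum_{k\in\Lambda\setminus\Lambda'}w_{jk}\overline{e_k}$ in the quotient to identify the lower-right block with $M_{\overline{B}}$. The only difference is that you spell out the verification that $B'$ and $\overline{B}$ are natural bases, which the paper takes for granted.
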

\begin{proof} Let $B$ be a basis satisfying  the conditions of our hypothesis  and $M_B=(w_{ij})_{i,j \in \Lambda}$ the corresponding structure matrix of $A$.
As $I$ is an ideal of $\A$ then $ e_i^2=\sum_{k\in \Lambda'}w_{ik}e_k$ for all $i \in \Lambda'$. Therefore the first $|\Lambda'|$ rows of $M_B$ has the desired form. On the other hand,  as $\overline{B}=\{\overline{e_i}\tq i \in\Lambda\setminus \Lambda' \}$ is a natural basis of $\A/I$ we have that 
\begin{equation} \label{eq:ei^2=wij=zij}
\overline{e_i}^2=\overline{\sum_{k \in \Lambda} w_{ik}e_k}=\sum_{k \in \Lambda} w_{ik}\overline{e_k}=\sum_{k \in\Lambda\setminus \Lambda'} w_{ik}\overline{e_k}, \, \text{ for all }i \in\Lambda\setminus \Lambda'.\end{equation}
If $M_{\overline{B}}=(z_{ij})_{i,j\in\Lambda\setminus\Lambda'}$ is the structure matrix of  $\A/I$ related to $\overline{B}$, then 
$$\overline{e_i}^2=\sum_{k \in\Lambda\setminus \Lambda'} z_{ik}\overline{e_k}\text{ for all }i \in\Lambda\setminus \Lambda'.$$
By Eq. \eqref{eq:ei^2=wij=zij} we have $z_{ij}=w_{ij}$ for all $i,j \in\Lambda\setminus \Lambda'$.
\end{proof}

\begin{corollary}\label{prop:relacaoentregrafos}
Let $\A$ be an evolution algebra with a natural basis $B$.  Then there exists  natural basis  $B'$ and $\overline{B}$ of $\rad (\A)$ and $\A/\rad(\A)$, respectively,  such that 
$\Gamma (\rad(\A),B')$ and $\Gamma (\A/\rad(\A),\overline{B})$
are full subgraphs  of $\Gamma (\A,B)$.
\end{corollary}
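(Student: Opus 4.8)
The plan is to obtain this corollary as a direct reading of Proposition \ref{prop:matrizblocosrad} applied to the ideal $I=\rad(\A)$. First I would verify the hypotheses of that proposition. Since $\rad(\A)$ is an ideal with the absorption property, Proposition \ref{prop:extension_property} guarantees that, when $\rad(\A)\neq\{0\}$, there is $\Lambda'\subseteq\Lambda$ with $\rad(\A)=\span\{e_i\tq i\in\Lambda'\}$, so in particular $\rad(\A)$ has the extension property. (If $\rad(\A)=\{0\}$ the statement is trivial: take $B'$ empty and $\overline{B}$ the canonical image of $B$.) Reordering $B$ so that the indices in $\Lambda'$ come first, Proposition \ref{prop:matrizblocosrad} then furnishes natural bases $B'$ of $\rad(\A)$ and $\overline{B}$ of $\A/\rad(\A)$ for which $M_B$ is block lower-triangular with diagonal blocks $M_{B'}$ and $M_{\overline{B}}$.

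The second, and essentially the only substantive, step is to translate the block form of the structure matrix into the stated relation between the associated graphs. Recall that $\Gamma(\A,B)$ has adjacency matrix $(a_{ij})$ with $a_{ij}=1$ precisely when the structure constant $w_{ij}$ is nonzero. The vertices of $\Gamma(\rad(\A),B')$ are indexed by $\Lambda'\subseteq\Lambda$, so its vertex set is contained in that of $\Gamma(\A,B)$; and since $M_{B'}$ is exactly the top-left block of $M_B$, we have $(M_{B'})_{ij}=w_{ij}$ for all $i,j\in\Lambda'$. Hence $(i,j)$ is an arrow of $\Gamma(\rad(\A),B')$ if and only if $w_{ij}\neq0$, that is, if and only if $(i,j)$ is an arrow of $\Gamma(\A,B)$ with both endpoints in $\Lambda'$. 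This is exactly the definition of a full subgraph: the arrow set is contained in that of $\Gamma(\A,B)$, and every arrow of $\Gamma(\A,B)$ joining two vertices of $\Lambda'$ is retained. The identical argument applies to $\A/\rad(\A)$, using the equality $z_{ij}=w_{ij}$ for $i,j\in\Lambda\setminus\Lambda'$ established inside the proof of Proposition \ref{prop:matrizblocosrad}, so that $\Gamma(\A/\rad(\A),\overline{B})$ is a full subgraph of $\Gamma(\A,B)$ on the vertex set $\Lambda\setminus\Lambda'$.

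I do not expect a genuine obstacle here; the content is bookkeeping. The one point that deserves care is that \emph{full} subgraph requires more than arrow containment: one must check that no arrow of $\Gamma(\A,B)$ between two retained vertices is dropped, which is why it is the \emph{equality} (rather than mere comparability) of the diagonal blocks with the corresponding entries of $M_B$ that I would emphasize. It is also worth noting explicitly why the two off-diagonal blocks cause no trouble. The zero block in the upper right reflects that $\rad(\A)$ is an ideal, so there are no arrows from $\Lambda'$ to $\Lambda\setminus\Lambda'$; while the block $X$ records arrows from $\Lambda\setminus\Lambda'$ into $\Lambda'$, which join vertices lying in different blocks and are therefore correctly excluded from both full subgraphs.
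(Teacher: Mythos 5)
Your proposal is correct and follows essentially the same route as the paper: the paper's proof also applies Proposition \ref{prop:matrizblocosrad} to $I=\rad(\A)$ (justifying the extension property via $\rad(\A)=\span\{e_i \tq i\in\lambda_{\m}(B)\}$ rather than citing Proposition \ref{prop:extension_property}, an equivalent route) and reads the full-subgraph claim off the block form of $M_B$. The paper leaves the matrix-to-graph translation implicit with ``our statement follows,'' so your explicit bookkeeping of the diagonal blocks and the role of the blocks $0$ and $X$ is simply a more detailed rendering of the same argument.
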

\begin{proof} 
Consider the natural basis $B'=\{e_i \tq i \in \LL_{\m}(B) \}$ of $\rad(\A)$. As $\rad (\A)$ has the extension property we can use Proposition \ref{prop:matrizblocosrad} considering the basis  $\overline{B}=\{\overline{e_i}\tq i \in\Lambda\setminus \Lambda' \}$ of $\A/\rad(\A)$. Then $M_B$ is seen as in Eq. \eqref{eq:matrizblocosrad}  and our statement follows.
\end{proof}
\begin{lemma} \label{lema:radCnideal}
Let $\A$ be a degenerate evolution algebra with a natural basis  $B$. If $\A\neq \rad(\A)$ and $\Gamma (\A,B)$ is connected then $V= \A \setminus \rad (\A)$ is not an ideal of $\A$.
\end{lemma}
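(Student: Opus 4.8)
The plan is a proof by contradiction built on the graph-theoretic description of $\rad(\A)$ from Theorem \ref{teo:rad}. Write $\Lambda'=\lambda_{\m}(B)$, so that $\rad(\A)=\span\{e_i \tq i\in\Lambda'\}$ and $V=\span\{e_i \tq i\in\Lambda\setminus\Lambda'\}$ is the complementary summand spanned by the basis vectors lying outside the radical (cf. Corollary \ref{prop:relacaoentregrafos}). First I would verify that both index blocks are nonempty: since $\A$ is degenerate, $\ann(\A)\neq\{0\}$, hence $\lambda_1(B)\neq\emptyset$ and so $\Lambda'\supseteq\lambda_1(B)\neq\emptyset$; while the hypothesis $\A\neq\rad(\A)$ forces $\Lambda\setminus\Lambda'\neq\emptyset$. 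Thus $\{\Lambda',\,\Lambda\setminus\Lambda'\}$ is a partition of $\Lambda$ into two nonempty blocks.

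Next I would translate the ideal condition into a statement about arrows of $\Gamma(\A,B)$. For any $x=\sum_{k}x_ke_k\in\A$ and any index $i$ one has $xe_i=x_ie_i^2=x_i\sum_{\ell\in D^1(i)}w_{i\ell}e_{\ell}$, so a subspace $\span\{e_i \tq i\in S\}$ is an ideal precisely when $D^1(i)\subseteq S$ for every $i\in S$. Applying this twice yields the two crossing conditions I need. On one hand, $\rad(\A)$ is an ideal, so $D^1(i)\subseteq\Lambda'$ for every $i\in\Lambda'$; equivalently, by Theorem \ref{teo:rad}, an acyclic vertex can only have acyclic descendants, so there is no arrow from $\Lambda'$ to $\Lambda\setminus\Lambda'$. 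On the other hand, assuming $V$ is an ideal gives $D^1(i)\subseteq\Lambda\setminus\Lambda'$ for every $i\in\Lambda\setminus\Lambda'$, so there is no arrow from $\Lambda\setminus\Lambda'$ to $\Lambda'$ either.

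Combining the two observations, no arrow of $\Gamma(\A,B)$ joins a vertex of $\Lambda'$ to a vertex of $\Lambda\setminus\Lambda'$ in either direction. Since both blocks are nonempty, the partition $\Lambda=\Lambda'\cup(\Lambda\setminus\Lambda')$ then contradicts the connectedness of $\Gamma(\A,B)$ in the sense of Section \ref{preliminaries}. Hence $V$ cannot be an ideal, which is the assertion. I expect the delicate points to be bookkeeping rather than a genuine obstacle: one must invoke degeneracy at exactly the right place to guarantee $\Lambda'\neq\emptyset$ (for a non-degenerate algebra $\rad(\A)=\{0\}$ and $V=\A$ would trivially be an ideal, so the hypothesis is essential), and one must correctly read the ideal property as the absence of arrows leaving each block, which is what makes the two halves of the argument close up against connectedness.
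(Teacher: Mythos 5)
Your proof is correct and follows essentially the same route as the paper: both arguments partition $\Lambda$ into the radical indices $\lambda_{\m}(B)$ (nonempty by degeneracy) and their complement (nonempty since $\A\neq\rad(\A)$), use the fact that a basis-spanned ideal admits no arrows leaving its index set, and contradict the connectedness of $\Gamma(\A,B)$. The only cosmetic difference is in the ordering: the paper first invokes connectedness to produce an arrow from the complement into $\lambda_{\m}(B)$ and then contradicts the directness of the sum $\A=\rad(\A)\oplus V$, whereas you assume $V$ is an ideal, deduce that no arrows cross the partition in either direction, and contradict connectedness directly.
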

\begin{proof} Let $B=\{e_i\}_{i \in \Lambda}$.
Firstly, by Proposition \ref{anni=spanlambdai} we have that  $\rad (\A) =\{e_i \in B \tq i \in \LLm(B)\}$, where $\m= \asi(\A)$, then $V=\span\{e_i \in B \tq i \in\Lambda\setminus\LLm(B)\}$. Since $\Gamma (\A, B)$ is connected, then there exist $j \in \Lambda \setminus \LLm(B)$ such that  $D^1(j)\cap \LLm(B) \neq \emptyset$. If  $D^1(j)\subseteq \LLm(B)$ then  $j$ is  an acyclic vertex  and therefore $j \in \LLm(B)$,  which is a contradiction. Thus $D^1(j)\not\subseteq  \LLm(B)$ and consequently   $D^1(j)\cap \left( \Lambda \setminus \LLm(B)\right) \neq \emptyset$.
Therefore we can write $e_j^2=u_1+u_2$, where $u_1 \in \rad(\A)\setminus \{0\}$ and $u_2\in V\setminus \{0 \}$. Let us suppose  that $V$ is an ideal. Then $e_j^2\in V$, which is a contradiction,  because $\A=\rad(\A)\oplus V$ (as vector space) and $u_1=e_j^2-u_2 \in \rad(\A)\cap V$, with $u_1\neq 0$.  
\end{proof}

\begin{theorem}
\label{prop:dimann=1irre}
Let $\A$ be a nilpotent evolution algebra with $\dim(\ann(\A))=1$. Then $\A$ is indecomposable.
\end{theorem}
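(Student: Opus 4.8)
The plan is to translate indecomposability into connectivity of the associated graph and to verify that connectivity in \emph{every} natural basis, using nilpotency to rule out cycles and $\dim\ann(\A)=1$ to pin down a unique sink.

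First I would fix an arbitrary natural basis $B=\{e_i\}_{i\in\Lambda}$ and read off what the two hypotheses say about $\Gamma(\A,B)$. Since $\A$ is nilpotent, the equivalence of items \ref{cor:radnil2} and \ref{cor:radnil3} in Corollary \ref{cor:a=radnilpotente} gives that $\Gamma(\A,B)$ has no cycles; as nilpotency is independent of the chosen natural basis, this holds for every natural basis. Moreover, because $\ann(\A)=\span\{e_i\tq e_i^2=0\}$ and the basis vectors with $e_i^2=0$ form a basis of $\ann(\A)$, the condition $\dim\ann(\A)=1$ forces exactly one basis vector to square to zero; that is, $\lambda_1(B)$ is a single vertex $s$, the unique sink of $\Gamma(\A,B)$ (no outgoing arrows), again in every natural basis.

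The crux is the purely graph-theoretic claim that a finite acyclic graph with a unique sink $s$ is connected in the sense of Section \ref{preliminaries}. I would establish it by showing that every vertex $i$ admits a directed path to $s$: by Remark \ref{rem:propciclico}, acyclicity of $i$ yields some $k$ with $D^k(i)=\emptyset$, so setting $r=\min\{k\in\N^*\tq D^k(i)=\emptyset\}$ gives $\emptyset\neq D^{r-1}(i)\subseteq\lambda_1(B)=\{s\}$, hence a path from $i$ to $s$. Weak connectivity is then immediate: for any partition $V=V_1\cup V_2$ into nonempty parts, a directed path to $s$ starting from a vertex lying on the side of the partition not containing $s$ must traverse an arrow crossing the partition, which is precisely what the definition of connectedness demands.

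Finally I would conclude with \cite[Proposition 2.10]{Elduque/Labra/2015}: since $\Gamma(\A,B)$ is connected for every natural basis $B$, the algebra $\A$ is indecomposable. The only genuinely non-routine step is the connectivity lemma for an acyclic graph with a single sink; everything else is dictionary translation between $\A$ and $\Gamma(\A,B)$. I would also flag that $\A$ is degenerate here (as $\ann(\A)\neq\{0\}$), so the non-degenerate criterion of \cite[Proposition 2.8]{Elduque/Labra/2015} is unavailable and one genuinely needs the all-bases version of Proposition 2.10.
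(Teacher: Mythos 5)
Your proposal is correct and follows essentially the same route as the paper: unique sink from $\dim(\ann(\A))=1$, a directed path from every other vertex to that sink (you inline the argument of Corollary \ref{cor:Lmciclo}\ref{cor:Lmciclo3} via Remark \ref{rem:propciclico}, while the paper cites that corollary after noting $\Lambda=\lambda_{\m}(B)$), connectivity of $\Gamma(\A,B)$ for an arbitrary natural basis $B$, and then \cite[Proposition 2.10]{Elduque/Labra/2015}. Your explicit remark that the degenerate case forces the all-bases criterion rather than Proposition 2.8 is a point the paper leaves implicit, but the substance of the two proofs is the same.
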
 

\begin{proof}
Let $B=\{e_i\}_{i \in \Lambda}$ be a natural basis of  $\A$ and $\LL_1(B)=\{\ell \}$. As $\A$ is nilpotent then, by Corollary  \ref{cor:a=radnilpotente} we have that  $\Lambda=\lambda_{\m}(B)$, where $\m=\asi(\A)$. 
If $j \in \Lambda \setminus \LL_1(B)$, considering Corollary \ref{cor:Lmciclo} \ref{cor:Lmciclo3}, there exist a path from $j$ to $\ell$.
Suppose that $\Gamma (\A,B)$ is  disconnected. Then there exist a  partition of $\Lambda=\Lambda'\cup \Lambda''$ such that  $D(k)\subseteq \Lambda'$ for all $k \in \Lambda'$ and  $D(k)\subseteq \Lambda''$ for all $k \in \Lambda''$. Without loss of generality we  assume that  $\ell \in \Lambda'$. Then if  $t\in \Lambda''$ there is no path from $t$ to  $\ell$, which is a contradiction. Therefore $\Gamma (\A,B)$ is connected and by \cite[Proposição 2.10]{Elduque/Labra/2015} we have that $\A$ is indecomposable.
\end{proof}

\begin{theorem} \label{teo:quocienteconexo}
Let $\A$ be a degenerate evolution algebra,  $B$  and $C$ natural basis of $\A$ and $\A/\rad(\A)$, respectively, such that $\Gamma (\A,B)$ and $\Gamma (\A/\rad(\A),C)$ are connected. If  $\rad (\A)$  is indecomposable then $\A$ is indecomposable.
\end{theorem}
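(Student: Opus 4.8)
The plan is to argue by contradiction: I assume $\A$ is decomposable and try to contradict one of the three standing hypotheses. So suppose $\A = I \oplus J$ with $I,J$ nonzero ideals. Passing to a natural basis of $\A$ adapted to this decomposition (the union of natural bases of $I$ and $J$, which exists for a direct sum of ideals of an evolution algebra), the graph $\Gamma(\A,\cdot)$ becomes the disjoint union of the graphs of $I$ and $J$, because $I\cdot J = 0$. By the acyclic-vertex description of Theorem \ref{teo:rad}, a vertex is acyclic in $\Gamma(\A,\cdot)$ precisely when it is acyclic inside its own summand; hence
\[
\rad(\A) = \rad(I) \oplus \rad(J),
\]
with $\rad(I),\rad(J)$ computed inside $I,J$. (Alternatively this follows from minimality of the absorption radical, since $\A/(\rad(I)\oplus\rad(J))\cong I/\rad(I)\oplus J/\rad(J)$ is non-degenerate.)

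Next I would use the two indecomposability-type hypotheses to pin down the shape of the decomposition. Since $\rad(\A)$ is indecomposable and equals $\rad(I)\oplus\rad(J)$, at least one summand must vanish; and because $\A$ is degenerate we have $\ann(\A)\subseteq\rad(\A)$ with $\ann(\A)\neq 0$, so $\rad(\A)\neq 0$ and exactly one summand vanishes. After relabelling, assume $\rad(J)=0$ (so $J$ is non-degenerate) and $\rad(I)=\rad(\A)$. Then
\[
\A/\rad(\A)\;\cong\;I/\rad(I)\ \oplus\ J .
\]
The quotient $\A/\rad(\A)$ is non-degenerate and, by hypothesis, carries the connected graph $\Gamma(\A/\rad(\A),C)$; hence by \cite[Proposition 2.8]{Elduque/Labra/2015} it is indecomposable. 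Therefore one summand above is zero, and since $J\neq 0$ we get $I/\rad(I)=0$, i.e. $I=\rad(\A)$. Thus any decomposition must have the special form
\[
\A = \rad(\A) \oplus J, \qquad J\neq 0,\ \ \rad(\A)\neq 0 .
\]

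The remaining and decisive step is to contradict the connectedness of $\Gamma(\A,B)$ for the \emph{given} basis $B$. Since $\A\neq\rad(\A)$ and $\Gamma(\A,B)$ is connected, Lemma \ref{lema:radCnideal} asserts that the coordinate complement $V=\span\{e_i : i\in\Lambda\setminus\LLm(B)\}$ of $\rad(\A)$ is \emph{not} an ideal. My aim would be to deduce from $\A=\rad(\A)\oplus J$ that $V$ actually \emph{is} an ideal (or to reorganize $B$ so that its non-radical part already spans an ideal complementing $\rad(\A)$), producing the contradiction.

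I expect this last step to be the genuine obstacle, and it is precisely where the indecomposability of $\rad(\A)$ and the block form of Proposition \ref{prop:matrizblocosrad} must be exploited. The difficulty is that the ideal complement $J$ produced above is a priori an \emph{arbitrary} vector-space complement of $\rad(\A)$ --- the graph of some linear map $\phi\colon V\to\rad(\A)$, $e_i\mapsto e_i+\phi(e_i)$ --- and need not coincide with the coordinate complement $V$, whereas Lemma \ref{lema:radCnideal} only controls $V$. Hence the crux is to show that, under connectedness of $\Gamma(\A,B)$ and indecomposability of $\rad(\A)$, the correcting map $\phi$ can be forced to be zero; equivalently, that the off-diagonal block $X$ in $M_B=\left(\begin{smallmatrix} M_{B'} & 0\\ X & M_{\overline B}\end{smallmatrix}\right)$ cannot be absorbed by adjusting the non-radical basis vectors within their $\rad(\A)$-cosets. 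Establishing that this coordinate alignment is \emph{forced}, rather than merely available for some decomposable algebras, is the heart of the matter and the step I would scrutinize most carefully before trusting the conclusion.
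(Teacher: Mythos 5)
Your reduction steps are correct: direct summands of an evolution algebra are themselves evolution algebras, so $\rad(\A)=\rad(I)\oplus\rad(J)$ does follow from Theorem \ref{teo:rad} applied to a basis adapted to the splitting; indecomposability of $\rad(\A)$ together with $\ann(\A)\subseteq\rad(\A)$ and $\ann(\A)\neq\{0\}$ kills exactly one radical summand; and Proposition 2.8 of \cite{Elduque/Labra/2015} applied to the non-degenerate quotient $\A/\rad(\A)$ forces $I=\rad(\A)$. This reduction is cleaner than anything in the paper's own argument. However, the step you refuse to take on trust --- replacing the ideal complement $J$ (the graph of a correcting map $\phi\colon V\to\rad(\A)$) by the coordinate complement $V$ that Lemma \ref{lema:radCnideal} controls --- is not merely the hard part: it is impossible, because the theorem as stated is false. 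Over any field, let $\A$ have natural basis $\{e_1,e_2\}$ with $e_1^2=0$ and $e_2^2=e_2$. Then $\A=\span\{e_1\}\oplus\span\{e_2\}$ is decomposable and degenerate; $\rad(\A)=\span\{e_1\}$ is one-dimensional, hence indecomposable; $\A/\rad(\A)$ is one-dimensional with $\overline{e_2}^{\,2}=\overline{e_2}$, so its graph (one vertex with a loop) is connected; and $B=\{f_1,f_2\}$ with $f_1=e_1$, $f_2=e_1+e_2$ is a natural basis ($f_1f_2=0$) satisfying $f_2^2=f_2-f_1$, so $\Gamma(\A,B)$ has edges $(2,1)$ and $(2,2)$ and is connected. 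All hypotheses hold, yet $\A$ is decomposable --- and the decomposition has exactly the shape you worried about: $J=\span\{e_2\}$ is an ideal while the coordinate complement $\span\{f_2\}$ is not, the discrepancy being $\phi(f_2)=-f_1\neq 0$.

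It is worth knowing that the paper's proof founders on precisely the point you flagged. It takes an arbitrary natural basis $B'=\{f_i\}$, correctly observes that the full subgraphs on $\LLm(B')$ and on $\Lambda\setminus\LLm(B')$ are connected (indecomposability of $\rad(\A)$ and of $\A/\rad(\A)$ plus Propositions 2.8 and 2.10 of \cite{Elduque/Labra/2015}), and then needs an edge from the second block into the first. For that it invokes Lemma \ref{lema:radCnideal}, which is applicable only to the hypothesis basis $B$ whose graph is known to be connected, and it bridges the two bases by writing $\span\{e_i \tq i\in\Lambda\setminus\LLm(B)\}=\span\{f_i \tq i\in\Lambda\setminus\LLm(B')\}$, i.e.\ by asserting that the coordinate complement of the radical is the same subspace for every natural basis. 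That identification is false in general, and it fails in the example above. So your steps through the reduction stand, your scruple about the final alignment step is the correct verdict, and no completion of either your argument or the paper's is possible without strengthening the hypotheses of the statement.
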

\begin{proof} If $\A=\rad(\A)$ then the statement follows. In the other case, by \cite[Proposição 2.10]{Elduque/Labra/2015},  we need to prove that for any natural basis $B'=\{f_i \}_{i \in \Lambda}$ of  $\A$ the graph  $\Gamma(\A,B')$ is connected.

Let $B'=\{f_i \}_{i \in \Lambda}$ be a natural basis  of  $\A$. Let us note that since  $\A/\rad(\A)$ is a non-degenerate evolution algebra e  $\overline{B'}=\{f_i\tq  i \in\Lambda\setminus \LLm(B') \}$, where $\m=\asi(\A)$,  is a natural bases  of $\A/\rad(\A)$ then $ \Gamma (\A/\rad(\A),\overline{B})$ is also connected. Furthermore, if we consider the natural basis $B''=\{f_i \tq i \in \LLm(B') \}$  of $\rad(\A)$, as $\rad(\A)$ is indecomposable then, by \cite[Proposição 2.8]{Elduque/Labra/2015}, we have that $\Gamma(\rad(\A), B'')$ is connected. 

Let $B=\{e_i\}_{i \in \Lambda}$ a natural basis of $\A$. By Lemma \ref{lema:radCnideal} we have that $V=\rad(\A)^C=\span \{e_i \tq i \in\Lambda\setminus \LLm(B) \}=\span \{f_i \tq i \in\Lambda\setminus \LLm(B) \}$ is not an ideal of $\A$. As $V\cdot\rad(\A)=\{0 \}$, then $V$ is not  closed under multiplication, i.e. there exist $i \in \Lambda \setminus \LLm(B')$ such that 
$e_i^2 \not\in V$. Equivalently, there exist $i\in\Lambda\setminus\LLm(B')$ and $j \in \LLm(B')$ such that $j \in D^1(i)$. 
Therefore $\Gamma (\A,B')=(\Lambda,E)$ have connected subgraphs  $\Gamma (\rad(\A),B'')=(\LLm(B'),E_1)$ and $\Gamma (\A/\rad(\A),\overline{B'})=(\Lambda\setminus \LLm(B'),E_2)$  such that  $E\cap (\Lambda\setminus\LLm(B'))\times (\LLm(B'))
\neq \emptyset$ and therefore $\Gamma(\A,B')$ is connected. 
\end{proof}

\begin{example} \label{quotient_by_radical}
Let $\A$ an evolution algebra with a natural basis  $B=\{e_1,e_2,e_3,e_4,e_5\}$ such that $e_1^2=0$, $e_2^2=e_1$, $e_3^2=e_2+e_4$, $e_4^2=e_5$ e $e_5^2=e_3$. Consider the structure matrix $M_B$:
$$M_B=\left(\begin{array}{cc|ccc}
    0&0&0&0&0\\
    1&0&0&0&0\\
    \hline
    0&1&0&1&0\\
    0&0&0&0&1\\
    0&0&1&0&0
\end{array} \right).$$
\noindent Also  consider the graph $\Gamma(\A, B)$, presented in Figure \ref{fig:exaGAB}.
\begin{figure}[!h] 
\centering
\begin{tikzpicture}[scale=1]
\draw [thick] (0,0) circle (7pt);
\draw (0,0) node[font=\footnotesize] {1};
\draw [thick] (2,0) circle (7pt);
\draw (2,0) node[font=\footnotesize] {2};
\draw [thick] (4,0) circle (7pt);
\draw (4,0) node[font=\footnotesize] {3};
\draw [thick] (5.5,1) circle (7pt);
\draw (5.5,1) node[font=\footnotesize] {4};
\draw [thick] (5.5,-1) circle (7pt);
\draw (5.5,-1) node[font=\footnotesize] {5};
\draw [thick, directed]  (1.75,0) to (0.25,0)  ;
\draw [thick, directed] (3.75,0) to (2.25,0)   ;
\draw [thick, directed] (4.18,0.18) to  (5.32,0.82);
\draw [thick, directed] (5.5,0.75) to  (5.5,-0.75);
\draw [thick, directed] (5.32,-0.82) to  (4.18,-0.18);
\end{tikzpicture}
\caption{$\Gamma (\A,B)$ for the algebra $\A$ of Example \ref{quotient_by_radical}.}
\label{fig:exaGAB}
\end{figure}
As $\{1,2\}$ is the set of acyclic vertices of $\Gamma (\A,B)$, by Theorem \ref{teo:rad}, we have that  $\LLm(B)=\{1,2\}$.
Therefore $\rad(\A)=\span \{e_1,e_2\}$ and the structure matrix of $\rad(\A)$ relative to $B'=\{e_1,e_2\}$ is given by
$$M_{B'}=\left(\begin{array}{cc}
    0&0\\
    1&0
\end{array} \right),$$
and the  associated graph to $\rad(\A)$ related to $B'$ is presented in Figure \ref{fig:exarad1}.
\begin{figure}[!h] 
\centering
\begin{tikzpicture}[scale=1]
\draw (2.5,-1) node[font=\footnotesize] {1};
\draw [thick] (2.5,-1) circle (7pt);
\draw (4,-1) node[font=\footnotesize] {2};
\draw [thick] (4,-1) circle (7pt);
\draw [thick, directed]   (3.75,-1) to (2.75,-1);
\end{tikzpicture}
\caption{$\Gamma (\rad(\A),B')$ for the algebra $\A$ of Example \ref{quotient_by_radical}.}  \label{fig:exarad1}
\end{figure}
Furthermore, $\overline{B}=\{\overline{e_3}, \overline{e_4}, \overline{e_5} \}$ is a natural basis of $\A/\rad (\A)$ and $\overline{e_3}^2=\overline{e_4}$, $\overline{e_4}^2=\overline{e_5}$, $\overline{e_5}^2=\overline{e_3}$. So, the structure matrix of $\A/\rad(\A)$ related to $\overline{B}$ is given by
$$M_{\overline{B}}=\left(\begin{array}{ccc}
     0&1&0  \\
     0&0&1  \\
     1&0&0
\end{array} \right).$$
Thus, the associated graph to $\A/\rad (\A)$ related to $\overline{B}$ is the graph of Figure \ref{fig:exaA/rad}. 
\begin{figure}[!h] \label{fig:exaA/rad}
\centering
\begin{tikzpicture}[scale=1]
\draw [thick] (4,0) circle (7pt);
\draw (4,0) node[font=\footnotesize] {3};
\draw [thick] (5.5,1) circle (7pt);
\draw (5.5,1) node[font=\footnotesize] {4};
\draw [thick] (5.5,-1) circle (7pt);
\draw (5.5,-1) node[font=\footnotesize] {5};
\draw [thick, directed] (4.18,0.18) to  (5.32,0.82);
\draw [thick, directed] (5.5,0.75) to  (5.5,-0.75);
\draw [thick, directed] (5.32,-0.82) to  (4.18,-0.18);
\end{tikzpicture}
\caption{$\Gamma (\A/\rad(\A),\overline{B})$ for the algebra $\A$ of Example \ref{quotient_by_radical}.}  \label{fig:exaA/rad}
\end{figure} 
\end{example}

\begin{corollary}
Let $\A$  be a degenerate evolution algebra, $B$ and $C$ natural basis of $\A$ and $\A/\rad(\A)$, respectively, such that $\Gamma (\A,B)$ and $\Gamma (\A/\rad(\A),C)$ are connected. If $\dim(\ann(\A))=1$ then $\A$ is indecomposable.
\end{corollary}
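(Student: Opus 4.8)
The plan is to reduce the statement to Theorem \ref{prop:dimann=1irre} and Theorem \ref{teo:quocienteconexo}, which together already supply almost everything. The connectedness hypotheses on $\Gamma(\A,B)$ and $\Gamma(\A/\rad(\A),C)$ are precisely those of Theorem \ref{teo:quocienteconexo}, so once I know that $\rad(\A)$ is indecomposable, the indecomposability of $\A$ follows at once. Thus the whole corollary collapses to the single claim: $\rad(\A)$ is indecomposable.

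To prove that claim I would apply Theorem \ref{prop:dimann=1irre}, which says a nilpotent evolution algebra with one-dimensional annihilator is indecomposable. Two things must be verified for the algebra $\rad(\A)$: that it is nilpotent, and that $\dim(\ann(\rad(\A)))=1$. The first is already available, since the absorption radical of any evolution algebra is nilpotent. So the real content is the identity $\ann(\rad(\A))=\ann(\A)$, which together with the hypothesis $\dim(\ann(\A))=1$ gives $\dim(\ann(\rad(\A)))=1$.

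To establish that identity I would fix the natural basis $B'=\{e_i \tq i \in \LLm(B)\}$ of $\rad(\A)$ provided by Corollary \ref{prop:relacaoentregrafos}, so that $\Gamma(\rad(\A),B')$ is a full subgraph of $\Gamma(\A,B)$ and the product $e_i^2$ computed inside the ideal $\rad(\A)$ agrees with the one computed in $\A$. Using the description $\ann(\A)=\span\{e_i \tq e_i^2=0\}$, the annihilator of $\rad(\A)$ is spanned by those $e_i$ with $i\in\LLm(B)$ and $e_i^2=0$; but $e_i^2=0$ is exactly $i\in\lambda_1(B)$, and since $\lambda_1(B)\subseteq\LLm(B)$ every such generator already lies in $\rad(\A)$. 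Hence $\ann(\rad(\A))=\span\{e_i \tq i\in\lambda_1(B)\}=\ann(\A)$, as required.

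With $\rad(\A)$ now known to be nilpotent with one-dimensional annihilator, Theorem \ref{prop:dimann=1irre} yields that $\rad(\A)$ is indecomposable, and Theorem \ref{teo:quocienteconexo} then finishes the proof. I expect the only delicate point to be the annihilator identity, and its subtlety lies entirely in checking that passing from $\A$ to the ideal $\rad(\A)$ neither creates nor destroys sinks of the associated graph — which is exactly what the full-subgraph conclusion of Corollary \ref{prop:relacaoentregrafos} guarantees. Everything else is a routine verification that the hypotheses of the two quoted theorems are met.
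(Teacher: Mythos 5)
Your proposal is correct and follows exactly the route the paper intends: the corollary is stated there without proof, as an immediate combination of Theorem \ref{prop:dimann=1irre} and Theorem \ref{teo:quocienteconexo}. Your verification of the missing bridge --- that $\ann(\rad(\A))=\ann(\A)$ because $\rad(\A)$ is an ideal spanned by $\{e_i \tq i\in\LLm(B)\}$ (so no arrows leave $\LLm(B)$ and the sinks of $\Gamma(\rad(\A),B')$ are precisely those of $\Gamma(\A,B)$, i.e.\ $\lambda_1(B)\subseteq\LLm(B)$), whence the nilpotent ideal $\rad(\A)$ has one-dimensional annihilator and is indecomposable --- is precisely the argument needed to invoke those two theorems.
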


\section{Acknowledgments}
The authors would like to thank all the participants of the  Brasil-Colombia webinar “Álgebras de evolución” for useful discussions during the preparation of this paper. T.R. thanks the UTFPR for all support provided and the use of the CCCT-CP computer facilities. P.C. thanks Pablo Rodriguez for his suggestions and comments.

\end{document}